\DeclareMathAlphabet{\eufrak}{U}{}{}{}  
\SetMathAlphabet\eufrak{normal}{U}{euf}{m}{n}
\SetMathAlphabet\eufrak{bold}{U}{euf}{b}{n}
\numberwithin{equation}{section}
\def\real{{\mathord{{\rm I\kern-2.8pt R}}}}        
\def\inte{{\mathord{{\rm I\kern-2.8pt N}}}}
\def\PP{{\mathord{{\rm I\kern-2.8pt P}}}}
\def\real{{\mathord{\mathbb R}}}
\def\inte{{\mathord{\mathbb N}}}
\def\R{\right}
\def\R{\mathbb{R}}
\def\E{\mathop{\hbox{\rm I\kern-0.20em E}}\nolimits}
\def\real{\mathbb{R}}
\newtheorem{prop}{Proposition}[section]
\newtheorem{lemma}[prop]{Lemma}
\newtheorem{theorem}[prop]{Theorem}
\newtheorem{remark}[prop]{Remark}
\begin{document}

\renewcommand{\thefootnote}{\fnsymbol{footnote}}

\begin{center}
{\large{\bf Berry-Ess\'een bounds for the least squares estimator
for discretely observed
 fractional Ornstein-Uhlenbeck processes    }}\\~\\
Khalifa Es-Sebaiy\footnote{{  National School of Applied Sciences -
Marrakesh, Cadi Ayyad University, Morocco.} E-mail: {\tt
k.essebaiy@uca.ma} }
 \\
{\it {  Cadi Ayyad University}}  \\~\\
\end{center}
{\small \noindent {\bf Abstract:} Let $\theta>0$. We consider a
one-dimensional fractional Ornstein-Uhlenbeck  process defined as
$dX_t= -\theta\ X_t dt+dB_t,\ t\geq0,$ where $B$ is a fractional
Brownian motion of Hurst parameter $H\in(\frac{1}{2},1)$. We are
interested in the problem of estimating the unknown parameter
$\theta$. For that purpose, we dispose of a discretized trajectory,
observed at $n$ equidistant times $t_i=i\Delta_{n}, i=0,\ldots,n$,
and $T_n=n\Delta_{n}$  denotes the length of the `observation
window'. We assume that $\Delta_{n} \rightarrow 0$ and
$T_n\rightarrow \infty$ as $n\rightarrow \infty$. As an estimator of
$\theta$ we choose the least  squares estimator (LSE)
$\widehat{\theta}_{n }$. The consistency of this estimator is
established. Explicit bounds for the Kolmogorov distance, in the
case when $H\in(\frac{1}{2},\frac{3}{4})$, in the central limit
theorem for the LSE $\widehat{\theta}_{n }$ are obtained. These results hold without any kind of ergodicity on the process $X$.\\

{\small \noindent {\bf Key words:} fractional Ornstein-Uhlenbeck
processes, discrete-time observation, least squares estimator,
Kolmogorov distance, central limit theorem, Malliavin calculus.

\section{Introduction}\label{intro}

In this paper we consider a fractional Ornstein-Uhlenbeck process
$X=(X_t,t\geq0)$. That is, it solves the  linear stochastic
differential equation
\begin{equation}\label{eqOU}
X_0=x_0;\quad dX_t= -\theta\ X_t dt+dB_t,\quad t\geq0,
\end{equation}
where $x_0\in\R$,  $B=(B_t,t\geq0)$ is a fractional Brownian motion
with Hurst parameter $H\in(\frac{1}{2},1)$ and  $\theta>0$ is
an unknown parameter.\\
Assume that the process   $X$ is observed equidistantly in time with
the step size $\Delta_n$: $t_i=i\Delta_{n}, i=0,\ldots,n$, and
$T_n=n\Delta_{n}$ denotes the length of the `observation window'.
The purpose of this paper is to study the least squares estimator
(LSE) $\widehat{\theta}_{n }$ of $\theta$ based on the sampling data
$X_{t_i}, i=0,\ldots,n$.
\\The LSE $\widehat{\theta}_{n }$  is obtained as
follows: $\widehat{\theta}_{n}$ minimizes
\[
\theta\mapsto
\sum_{i=1}^{n}\int_{t_{i-1}}^{t_i}\left|\dot{X}_{t}+\theta
X_{t_{i-1}}\right|^2dt,
\]where $t_i=i\Delta_{n}, i=0,\ldots,n$. Thus
$\widehat{\theta}_n$ is given by
\begin{equation}\label{estimator}
\widehat{\theta}_{n}=-\frac{\sum_{i=1}^{n}\int_{t_{i-1}}^{t_i}
X_{t_{i-1}}\delta X_t}{\Delta_{n}\sum_{i=1}^{n} X_{t_{i-1}}^2}.
\end{equation}
Also, by using (\ref{eqOU}), we arrive to the following formula:
\begin{eqnarray}\label{representation estimator-theta}
\widehat{\theta}_{n}-\theta&=&-\frac{\sum_{i=1}^{n}
U_i}{\Delta_{n}\sum_{i=1}^{n} X_{t_{i-1}}^2}
\end{eqnarray}where
{  \begin{eqnarray*}\label{expression of U} U_i=\theta
X_{t_{i-1}}\int_{t_{i-1}}^{t_{i}}(X_{t_{i-1}}-X_s)ds+\int_{t_{i-1}}^{t_i}
X_{t_{i-1}} \delta B_t,\qquad
 i=1,\ldots,n.
\end{eqnarray*}}

The parametric estimation problems for fractional diffusion
processes based on continuous-time observations have been studied
e.g. in \cite{KL, TV, Rao05, Rao2010} via maximum likelihood method.
Recently, the parametric estimation of the continuously observed
fractional Ornstein-Uhlenbeck process defined in (\ref{eqOU}) is
 studied by using the least
 squares estimator (LSE) defined by
 \begin{eqnarray*}\widetilde{\theta}_T=-\frac{\int_0^TX_t\delta X_t}{\int_0^TX_t^2dt}
\end{eqnarray*}In the case  $\theta>0$,  Hu and Nualart \cite{HN}
proved that the LSE $\widetilde{\theta}_T$ of $\theta$ is strongly
consistent and asymptotically normal. In addition, they also proved
that the following estimator
 \begin{eqnarray*}\overline{\theta}_T=\left(\frac{1}{H\Gamma(H)T}{\int_0^TX_t^2dt}\right)^{-\frac{1}{2H}}
\end{eqnarray*} is strongly consistent and asymptotically normal. In the case
$\theta<0$, Belfadli et al. \cite{BEO} established that the LSE
$\widetilde{\theta}_T$ of $\theta$ is strongly consistent and
asymptotically Cauchy.

From a practical point of view, in parametric inference, it is more
realistic and interesting to consider asymptotic estimation for
fractional diffusion processes based on discrete observations.\\
There exists a rich literature on the parameter estimation problem
for diffusion processes driven by Brownian motions  based on
discrete observations, , see \cite{Rao88} and \cite{Rao2010} for
more details about this point. For our  fractional
Ornstein-Uhlenbeck process (\ref{eqOU}), Hu and Song \cite{HS},
motivated by the estimator $\overline{\theta}_T$, proved that the
following estimator
\begin{eqnarray*}\underline{\theta}(n)=\left(\frac{1}{nH\Gamma(H)}\sum_{i=1}^{n}
X_{t_{i}}^2\right)^{-\frac{1}{2H}}
\end{eqnarray*} is strongly consistent, and they provided a Berry-Esseen type theorem for  $\underline{\theta}(n)$.
In this paper, we focus our discussion on the LSE case.

 In general, the study of the
asymptotic distribution of any estimator is not very useful for
practical purposes unless the rate of convergence of its
distribution is known. The rate of convergence of the distribution
of LSE for some diffusion processes driven by Brownian motions based
on discrete time data was studied e.g. in \cite{MR}. To the best of
our knowledge there is no study of this problem for the distribution
of the LSE of the unknown drift parameter in equation (\ref{eqOU}).
Our goal in the present paper is to investigate the consistency and
the rate of convergence to normality of the LSE $\widehat{\theta}_n$
defined in (\ref{estimator}).

Recall that, if $Y$, $Z$ are two real-valued random variables, then
the Kolmogorov distance between the law of  $Y$ and the law of $Z$
is given by
\begin{eqnarray*}d_{\mbox{\tiny{Kol}}}(Y,Z)=\sup_{-\infty<z<\infty}|P(Y\leq z)-P(Z\leq z)|.
\end{eqnarray*}
Let us now describe the results we prove in this work. In Theorem
\ref{consistency} we show that the consistency of
$\widehat{\theta}_n$ as $\Delta_{n} \rightarrow 0$ and
$n\Delta_{n}\rightarrow \infty$   holds true if
$H\in(\frac{1}{2},1)$. When $H\in(\frac{1}{2},\frac{3}{4})$ we use
the Malliavin calculus, the so-called Stein's method on Wiener chaos
introduced by \cite{Nou-Pecc} and the technical Lemmas \ref{lemma
quotient} and \ref{lemma technical} proved respectively by \cite{MP}
and \cite{BGS}, to derive Berry-Ess\'een-type bounds in the
Kolmogorov distance for the LSE $\widehat{\theta}_n$ (Theorems
\ref{main result} and \ref{main result second}).

We proceed as follows. In Section 2 we give the basic tools of
Malliavin calculus for the fractional Brownian motion needed
throughout the paper. Section 3 contains our main results,
concerning the consistency and the rate of convergence of
$\widehat{\theta}_n$.

\section{Preliminaries}
In this section we describe some basic facts on the  stochastic
calculus with respect to a
fractional Brownian motion. For more complete presentation on the subject, see   \cite{nualart-book} and \cite{AN}.\\
The  fractional Brownian motion $(B_t, t\geq0)$ with Hurst parameter
$H\in(0,1)$, is defined as a centered Gaussian process starting from
zero with covariance
\[R_H(t,s)=E(B_tB_s)=\frac{1}{2}\left(t^{2H}+s^{2H}-|t-s|^{2H}\right).\]
We assume that $B$ is defined on a complete probability space
$(\Omega, \mathcal{F}, P)$ such that $\mathcal{F}$ is the
sigma-field generated by $B$. By  Kolmogorov's continuity criterion
and the fact  $$E\left(B_t-B_s\right)^2=|s-t|^{2H};\ s,\ t\geq~0,$$
we deduce that $B$ {  admits a version which} has H\"older
continuous paths of any order $\gamma<H$.

 Fix a time interval $[0, T]$. We denote by $\cal{H}$ the canonical Hilbert space associated to the  fractional
Brownian motion $B$. That is, $\cal{H}$ is the closure of the linear
span $\mathcal{E}$ generated by the indicator functions $
1_{[0,t]},\ t\in[0,T] $ with respect to the scalar product
\[\langle1_{[0,t]},1_{[0,s]}\rangle=R_H(t,s).\]
The application $\varphi\in\mathcal{E}\longrightarrow B(\varphi)$ is an isometry from $\mathcal{E}$ to the Gaussian space generated by $B$
and it can be extended to $\cal{H}$.\\
If $H\in(\frac{1}{2},1)$ the elements of $\cal{H}$ may { not be }
functions but distributions of negative order (see \cite{PT}).
Therefore,
it is of interest to know significant subspaces of functions contained in it.\\
Let $|\cal{H}|$  be the set of measurable functions  $\varphi$   on
$[0, T]$ such that
\[\|\varphi\|_{|\cal{H}|}^2:=H(2H-1)\int_0^T\int_0^T|\varphi(u)||\varphi(v)||u-v|^{2H-2}dudv<\infty.\] Note that, if $\varphi,\ \psi\in|\cal{H}|$,
\[ E(B(\varphi)B(\psi))=H(2H-1)\int_0^T\int_0^T\varphi(u)\psi(v)|u-v|^{2H-2}dudv.\]
It follows actually from \cite{PT}  that the space $|\cal{H}|$ is a
Banach space for the norm $\|.\|_{|\cal{H}|}$ and it is included in
$\cal{H}$. In fact,
\begin{eqnarray}\label{inclusions} L^2([0, T]) \subset L^{\frac{1}{H}}([0, T])\subset|\cal{H}|\subset\cal{H}.
\end{eqnarray}

Let $\mathrm{C}_b^{\infty}(\R^n,\R)$
 be the class of infinitely
differentiable functions $f: \R^n \longrightarrow \R$ such that $f$
and all its partial derivatives are bounded. We denote by $\cal{S}$
the class of smooth cylindrical random variables F of the form
\begin{eqnarray}F = f(B(\varphi_1),...,B(\varphi_n)),\label{functional}\end{eqnarray} where $n\geq1$, $f\in \mathrm{C}_b^{\infty}(\R^n,\R)$
 and $\varphi_1,...,\varphi_n\in\cal{H}.$\\
 The derivative operator $D$ of a smooth  cylindrical random variable $F$ of the form (\ref{functional}) is
defined as the $\cal{H}$-valued random variable{
$$DF=\sum_{i=1}^{n}\frac{\partial f}{\partial
x_i}(B(\varphi_1),...,B(\varphi_n))\varphi_i.$$} In this way the
derivative $DF$ is an element of $L^2(\Omega ;\cal{H})$. We denote
by $D^{1,2}$ the closure of $\mathcal{S}$ with respect to the norm
defined by{
$$\|F\|_{1,2}^2=E(F^2)+E(\|DF\|^2_{{\cal{H}}}).$$ }
The divergence operator $\delta$ is the adjoint of the derivative
operator $D$. Concretely, a random variable $u\in
L^2(\Omega;\cal{H})$ belongs to the domain of the divergence
operator $Dom\delta$ if
\[E\left|\langle DF,u\rangle_{\cal{H}}\right|\leq c_u\|F\|_{L^2(\Omega)}\]for every $F\in \mathcal{S}$,
where $c_u$ is a constant which depends only on $u$. In this case
$\delta(u)$ is given by the duality relationship
\begin{eqnarray*}E(F\delta(u))=E\left<DF,u\right>_{\cal{H}}
\end{eqnarray*}
for any $F\in D^{1,2}$. We will make use of the notation
$$\delta(u)=\int_0^Tu_s\delta B_s,\quad u\in Dom\delta.$$
In particular, for $h\in\cal{H}$, $B(h)=\delta(h)=\int_0^Th_s\delta
B_s.$\\
Assume that $H\in(\frac{1}{2},1)$. If $u\in D^{1,2}(|{\cal{H}}|)$,
$u$ belongs to $Dom\delta$ and we have (see \cite[Page
292]{nualart-book})
\begin{eqnarray*}E(|\delta(u)|^2)\leq
c_H\left(\|E(u)\|_{|{\cal{H}}|}^2+E\left(\|Du\|_{|{\cal{H}}|\otimes|{\cal{H}}|}^2
\right)\right),
\end{eqnarray*}where the constant $c_H$ depends only on $H$.\\
As a consequence, applying (\ref{inclusions}) we obtain that
\begin{eqnarray}\label{majoration second moment of skorohod}
E(|\delta(u)|^2)\leq
c_H\left(\|E(u)\|_{L^{\frac{1}{H}}([0,
T])}^2+E\left(\|Du\|_{L^{\frac{1}{H}}([0, T]^2)}^2 \right)\right).
\end{eqnarray}

For every $n\geq1$, let ${\cal{H}}_n$ be the nth Wiener chaos of
$B$, that is, the closed linear subspace of $L^2(\Omega)$ generated
by the random variables $\{H_n(B(h)), h\in{{\cal{H}}},
\|h\|_{{\cal{H}}} = 1\}$ where $H_n$ is the nth Hermite polynomial.
The mapping ${I_n(h^{\otimes n})}=n!H_n(B(h))$ provides a linear
isometry between the symmetric tensor product ${\cal{H}}^{\odot n}$
(equipped with the modified norm $\|.\|_{{\cal{H}}^{\odot
n}}=\frac{1}{\sqrt{n!}}\|.\|_{{\cal{H}}^{\otimes n}}$) and
${\cal{H}}_n$. For every  $f,g\in{{\cal{H}}}^{\odot n}$ the
following   product formula holds
\[E\left(I_n(f)I_n(g)\right)=n!\langle f,g\rangle_{{\cal{H}}^{\otimes n}}.\]
 On the other hand, it is well-known   that $L^2(\Omega)$ can be decomposed into the infinite orthogonal sum of the spaces ${\cal{H}}_n$. That is, any square
integrable random variable $F\in L^2(\Omega)$  admits the following
chaotic expansion
\[F=E(F)+\sum_{n=1}^{\infty}I_n(f_n),\]
where the $f_n \in{{\cal{H}}}^{\odot n}$  are uniquely determined by
$F$.

We will make use of the following theorem proved in
\cite{Nou-Pecc}.{
\begin{theorem}[Nourdin-Peccati] \label{nourdin-peccati}Let  $F=I_q(f)$ with $q\geq2$ and $f\in{{\cal{H}}^{\odot
q}}$. Then,
\begin{eqnarray}d_{\mbox{\tiny{Kol}}}(F,N)
\leq\sqrt{E\left[\left(1-\frac{1}{q}\|DF\|_{{\cal{H}}}^2\right)^2\right]},
\end{eqnarray}
\end{theorem}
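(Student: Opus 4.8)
The plan is to combine Stein's method for the standard normal law $N\sim\mathcal N(0,1)$ with the Malliavin integration by parts formula on Wiener space; this is precisely the ``Stein's method on Wiener chaos'' mechanism. First I would recall the Stein characterization of the Kolmogorov distance. For each fixed $z\in\R$ the Stein equation
\[
f'(x)-xf(x)=\ind_{(-\infty,z]}(x)-P(N\leq z)
\]
admits a unique bounded solution $f_z$, and an elementary analysis of this ODE gives the key estimate $\|f_z'\|_{\infty}\leq 1$. Evaluating the equation at $F$ and taking expectations yields, for every $z$,
\[
P(F\leq z)-P(N\leq z)=E\left[f_z'(F)-Ff_z(F)\right],
\]
so that $d_{\mbox{\tiny{Kol}}}(F,N)=\sup_{z}\left|E\left[f_z'(F)-Ff_z(F)\right]\right|$.

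Next I would rewrite $E[Ff_z(F)]$ using Malliavin calculus. Writing $L$ for the generator of the Ornstein--Uhlenbeck semigroup and $L^{-1}$ for its pseudo-inverse, the basic integration by parts identity reads $E[Fg(F)]=E\left[g'(F)\,\langle DF,-DL^{-1}F\rangle_{{\cal H}}\right]$ for smooth enough $g$. Because $F=I_q(f)$ lives in the single $q$th chaos, one has $L^{-1}F=-\frac1q F$, hence $-DL^{-1}F=\frac1q DF$, and the identity collapses to
\[
E[Ff_z(F)]=E\left[f_z'(F)\,\tfrac1q\|DF\|_{{\cal H}}^2\right].
\]
Substituting this into the Stein expression produces the crucial rewriting
\[
E\left[f_z'(F)-Ff_z(F)\right]=E\left[f_z'(F)\left(1-\tfrac1q\|DF\|_{{\cal H}}^2\right)\right].
\]

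To finish, I would bound this uniformly in $z$. Using $\|f_z'\|_{\infty}\leq 1$ and then Cauchy--Schwarz,
\[
\left|E\left[f_z'(F)\left(1-\tfrac1q\|DF\|_{{\cal H}}^2\right)\right]\right|
\leq E\left|1-\tfrac1q\|DF\|_{{\cal H}}^2\right|
\leq\sqrt{E\left[\left(1-\tfrac1q\|DF\|_{{\cal H}}^2\right)^2\right]},
\]
and taking the supremum over $z$ gives the announced inequality. Here I use implicitly that $F$ has unit variance, so that by the chaos property $E[\tfrac1q\|DF\|_{{\cal H}}^2]=E[F^2]=1$; this is what makes the right-hand side a genuine measure of the $L^2$-fluctuation of $\tfrac1q\|DF\|_{{\cal H}}^2$ around its constant mean $1$.

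The main obstacle is not any single computation but the regularity bookkeeping needed to justify the integration by parts. The Stein solution $f_z$ is only Lipschitz, with a jump in its derivative at the point $z$, so the identity $E[Fg(F)]=E[g'(F)\langle DF,-DL^{-1}F\rangle_{{\cal H}}]$ cannot be applied to $g=f_z$ directly and must instead be obtained by approximating $f_z$ by $\mathrm C^1$ functions and passing to the limit, using the a priori bounds on $f_z$ and $f_z'$ to control the limit. Verifying that $F=I_q(f)\in D^{1,2}$, so that all the Malliavin objects above are well defined, is a preliminary that is immediate for a fixed chaos element. Once these technical points are in place, the chaos relation $L^{-1}F=-\frac1q F$ together with $\|f_z'\|_{\infty}\leq1$ are the two ingredients that make the argument collapse to the stated bound.
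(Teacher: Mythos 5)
Your argument is correct and is exactly the standard Nourdin--Peccati proof (Stein equation for the Kolmogorov distance with $\|f_z'\|_\infty\leq 1$, the Malliavin integration by parts $E[Fg(F)]=E[g'(F)\langle DF,-DL^{-1}F\rangle_{\cal H}]$, the single-chaos relation $L^{-1}F=-\frac{1}{q}F$, then Cauchy--Schwarz); the paper itself gives no proof, quoting the theorem directly from \cite{Nou-Pecc}, and your write-up matches that source's argument, including the correct observation that the non-smoothness of $f_z$ at $z$ is handled by approximation (and is harmless since $F=I_q(f)$ with $f\neq 0$ has an absolutely continuous law). Your aside about unit variance is not actually needed for the inequality to hold --- it only matters for the bound to be informative.
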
where  $N\sim\mathcal{N}(0,1)$.}\\
Fix $T>0$. Let $f, g: [0,T]\longrightarrow\mathbb{R}$ { be} H\"older
continuous functions of orders $\alpha\in(0,1)$ and $\beta\in(0,1)$
respectively with $\alpha+\beta>1$. Young \cite{Young} proved that
the Riemann-Stieltjes integral (the so-called Young integral)
$\int_0^Tf_sdg_s$ exists. Moreover, if
$\alpha=\beta\in(\frac{1}{2},1)$ and $\phi:
\mathbb{R}^2\longrightarrow\mathbb{R}$ is a function of class
$\mathcal{C}^1$,
  the integrals $\int_0^.\frac{\partial\phi}{\partial f}(f_u,g_u)df_u$ and $\int_0^.\frac{\partial\phi}{\partial g}(f_u,g_u)dg_u$ exist in the Young sense and the following change of variables formula holds:
\begin{eqnarray}\label{change of variables formula}
\phi(f_t,g_t)=\phi(f_0,g_0)+\int_0^t\frac{\partial\phi}{\partial
f}(f_u,g_u)df_u+\int_0^t\frac{\partial\phi}{\partial
g}(f_u,g_u)dg_u,\quad 0\leq t\leq T.
\end{eqnarray}
As a consequence, if $H\in(\frac{1}{2},1)$ and $(u_t,\ t\in[0, T])$
{  is } a process with H\"older paths of order $\alpha\in(1-H,1)$,
the integral $\int_0^Tu_sdB_s$ is well-defined as {  a} Young
integral. Suppose moreover that for any  $ t\in[0,T]$, $u_t\in
D^{1,2}$, and
\[P\left(\int_0^T\int_0^T|D_su_t||t-s|^{2H-2}dsdt<\infty\right)=1.\]
Then, by \cite{AN}, $u\in Dom\delta$ and for every $ t\in[0,T]$,
\begin{eqnarray}\label{link}\int_0^tu_sdB_s=\int_0^tu_s\delta B_s+H(2H-1)\int_0^t\int_0^tD_su_r|s-r|^{2H-2}drds.
\end{eqnarray}
In particular, when $\varphi$ is a non-random  H\"older continuous
function of order $\alpha\in(1-H,1)$, we obtain
\begin{eqnarray}\label{non random}\int_0^T\varphi_sdB_s=\int_0^T\varphi_s\delta B_s=B(\varphi).\end{eqnarray}
\\In addition,  for all $\varphi,\ \psi\in|\cal{H}|$,
\begin{eqnarray}E\left(\int_0^T\varphi_sdB_s\int_0^T\psi_sdB_s\right)
=H(2H-1)\int_0^T\int_0^T\varphi(u)\psi(v)|u-v|^{2H-2}dudv.\end{eqnarray}

\section{Asymptotic behavior of the least squares estimator}
Throughout this paper we assume $H\in(\frac{1}{2},1)$ and
$\theta>0$. Let us consider the equation (\ref{eqOU}) driven by a
fractional Brownian motion $B$  with Hurst parameter $H$ and
$\theta$ is the unknown parameter to be estimated for discretely
observed $X$. The linear equation (\ref{eqOU}) has the following
explicit solution:
\begin{eqnarray}\label{explicit solution}X_t=e^{-\theta t}\left(x_0+\int_0^te^{\theta s}dB_s\right),\qquad t\geq0,
\end{eqnarray}
where the integral can be understood either in the Young sense, or in the Skorohod sense, see indeed (\ref{non random}).  \\
Let us introduce the following two processes related to $X$: for
$t\geq0$, \begin{eqnarray*}&&\xi_t=\int_0^te^{\theta s}dB_s;\\&&{
A_t=e^{-\theta t}\int_0^te^{\theta s}dB_s.}\end{eqnarray*}{  In
particular, we observe that
\begin{eqnarray}\label{second explicit solution}X_t=x_0e^{-\theta t}+A_t\qquad\mbox{ for
}t\geq0.
\end{eqnarray}}We shall use
the notation $a_n\trianglelefteqslant b_n$ to {  indicate} that
there exists a positive constant $c(x_0,\theta,H)$ (depending only
on $x_0,\theta$ and $H$) such that,
\[\sup_{n\geq1}|a_n|/|b_n|<c(x_0,\theta,H)<\infty.\]\\
We define the following sequence, which will be used throughout {
this paper},
 \begin{eqnarray}\label{alpha_n}\alpha_{n}=H(2H-1)\int_0^{T_n}  \int_0^t e^{-\theta
u}u^{2H-2}du dt,\quad n\geq0.\end{eqnarray}
  We shall be using the following
lemmas several times.
\begin{lemma}Let $H\in(\frac{1}{2},1)$, let $\theta>0$, and let $\alpha_{n}$ be the sequence defined
by (\ref{alpha_n}).
 Then
\begin{eqnarray}\label{link Young-Skorohod via alpha_n}
\int_0^{T_n}  X_sdB_s &=& \int_0^{T_n}  X_s\delta B_s+\alpha_{n},
\quad n\geq0,
\end{eqnarray} and
\begin{eqnarray}\label{limit alpha_{n}}
\lim_{n\rightarrow\infty}\frac{\alpha_{n} }{{T_n} }
 &=&\theta^{1-2H}H\Gamma(2H).
\end{eqnarray}In particular, $T_n\trianglelefteqslant \alpha_{n}$.
\end{lemma}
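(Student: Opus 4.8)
The plan is to treat the two claims separately, and the engine for both is the computation of the Malliavin derivative of $X$. For the identity (\ref{link Young-Skorohod via alpha_n}), the natural route is to apply the general Young--Skorohod relation (\ref{link}) to the integrand $u_s=X_s$ on $[0,T_n]$; the correction term there is governed by $D_sX_r$, so the first task is to compute it. Since $X_r=e^{-\theta r}(x_0+\xi_r)$ with $\xi_r=\int_0^re^{\theta s}dB_s=B(e^{\theta\cdot}1_{[0,r]})$ a first-chaos (Wiener) integral, the derivative is immediate: $D_s\xi_r=e^{\theta s}1_{\{s\le r\}}$, hence $D_sX_r=e^{-\theta(r-s)}1_{\{s\le r\}}$.

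Before invoking (\ref{link}) I would verify its hypotheses. The solution $X$ inherits from $B$ H\"older paths of order $\alpha$ for every $\alpha<H$, and since $H>\frac12$ one may choose $\alpha\in(1-H,H)$, so the Young integral $\int_0^{T_n}X_s\,dB_s$ is well defined. Moreover $X_r\in D^{1,2}$ for each $r$, and because $|D_sX_r|\le 1$ while $\int_0^{T_n}\!\int_0^{T_n}|s-r|^{2H-2}\,ds\,dr<\infty$ (as $2H-2>-1$), the integrability condition needed in (\ref{link}) holds almost surely. Then (\ref{link}) yields
\[\int_0^{T_n} X_s\, dB_s=\int_0^{T_n} X_s\,\delta B_s+H(2H-1)\int_0^{T_n}\!\int_0^{T_n} e^{-\theta(r-s)}\,1_{\{s\le r\}}\,|s-r|^{2H-2}\,dr\,ds.\]
Restricting the inner integral to the region $s\le r$, substituting $u=r-s$ so that the factor $e^{-\theta(r-s)}$ collapses to $e^{-\theta u}$, and then replacing the outer variable by $t=T_n-s$ (Fubini), the double integral becomes exactly
\[H(2H-1)\int_0^{T_n}\!\int_0^t e^{-\theta u}u^{2H-2}\,du\,dt=\alpha_n,\]
which is precisely (\ref{link Young-Skorohod via alpha_n}).

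For the limit (\ref{limit alpha_{n}}), write $F(t)=\int_0^t e^{-\theta u}u^{2H-2}\,du$, so that $\alpha_n=H(2H-1)\int_0^{T_n}F(t)\,dt$. Since $2H-2>-1$ the integrand is integrable near $0$, and letting $t\to\infty$ the substitution $v=\theta u$ gives the finite value $F(\infty)=\theta^{1-2H}\Gamma(2H-1)$. As $T_n\to\infty$, the Ces\`aro average of the bounded convergent function $F$ converges to its limit, i.e. $\frac{1}{T_n}\int_0^{T_n}F(t)\,dt\to F(\infty)$, whence $\alpha_n/T_n\to H(2H-1)\,\theta^{1-2H}\Gamma(2H-1)$. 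Applying the functional equation $(2H-1)\Gamma(2H-1)=\Gamma(2H)$ converts this to the stated value $\theta^{1-2H}H\Gamma(2H)$, and in particular $\alpha_n\trianglelefteqslant T_n$.

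The calculations are routine once $D_sX_r$ is in hand; the two steps deserving care are the change-of-variables bookkeeping that matches the correction term to the specific double-integral form of $\alpha_n$ in (\ref{alpha_n}), and the verification of the hypotheses of (\ref{link})---in particular the H\"older regularity of $X$ needed to make the Young integral meaningful. I expect the latter to be the main (though mild) obstacle.
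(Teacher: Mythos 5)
Your proof is correct and follows essentially the same route as the paper: compute $D_sX_t=e^{-\theta(t-s)}1_{\{s\le t\}}$, apply (\ref{link}) and change variables to recover $\alpha_n$, then identify the limit of $\alpha_n/T_n$ via $\int_0^\infty e^{-\theta u}u^{2H-2}\,du=\theta^{1-2H}\Gamma(2H-1)$ and the identity $(2H-1)\Gamma(2H-1)=\Gamma(2H)$. The only differences are cosmetic: you verify the hypotheses of (\ref{link}) explicitly (the paper omits this) and use a Ces\`aro-mean argument for the limit where the paper uses Fubini to rewrite the double integral as $\int_0^{T_n}e^{-\theta u}u^{2H-2}(T_n-u)\,du$ before passing to the limit.
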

\begin{proof}By (\ref{link}), we have
\begin{eqnarray*}
\int_0^{T_n}  X_sdB_s &=& \int_0^{T_n} X_s\delta
B_s+H(2H-1)\int_0^{T_n} \int_0^{T_n}
D_sX_t|t-s|^{2H-2}dsdt\nonumber\\ &=& \int_0^{T_n} X_s\delta
B_s+H(2H-1)\int_0^{T_n}  \int_0^t
e^{-\theta(t-s)}(t-s)^{2H-2}dsdt\nonumber\\ &=& \int_0^{T_n}
X_s\delta
B_s+H(2H-1)\int_0^{T_n}  \int_0^t e^{-\theta u}u^{2H-2}dudt\nonumber\\
&=& \int_0^{T_n}  X_s\delta B_s+\alpha_{n}.
\end{eqnarray*}
On the other hand,
\begin{eqnarray*}
\lim_{n\rightarrow\infty}\frac{\alpha_{n} }{{T_n}
}&=&\lim_{n\rightarrow\infty}\frac{H(2H-1)}{{T_n} }\int_0^{T_n}
\int_0^t e^{-\theta
u}u^{2H-2}dudt\\&=&\lim_{n\rightarrow\infty}\frac{H(2H-1)}{{T_n}
}\int_0^{T_n}e^{-\theta u}u^{2H-2}(T_n-u)du\\
&=&  H(2H-1)\int_0^{\infty}
 e^{-\theta u}u^{2H-2}du\\
 &=&\theta^{1-2H}H\Gamma(2H).
\end{eqnarray*}
Thus, the proof is finished.
\end{proof}
\begin{lemma}\label{supX^2<infty}Assume $H\in(1/2,1)$ and $\theta>0$.
Then,   there exists a constant $c>0$, depending only on $x_0$,
$\theta$ and $H$, such that
\begin{eqnarray*}\sup_{t\geq0}E\left(X_t^2\right)<c<\infty.
\end{eqnarray*}
\end{lemma}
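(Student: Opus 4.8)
The plan is to read off everything from the explicit representation (\ref{second explicit solution}), namely $X_t = x_0 e^{-\theta t} + A_t$ with $A_t = e^{-\theta t}\int_0^t e^{\theta s}\,dB_s$. Applying the elementary inequality $(a+b)^2 \le 2a^2 + 2b^2$ gives
\[
E\left(X_t^2\right) \le 2x_0^2 e^{-2\theta t} + 2E\left(A_t^2\right) \le 2x_0^2 + 2E\left(A_t^2\right),
\]
so that the deterministic part is trivially bounded and the whole problem reduces to establishing a bound on $E(A_t^2)$ that is \emph{uniform} in $t\ge 0$.

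First I would compute $E(A_t^2)$ in closed form. Writing $A_t = e^{-\theta t}\xi_t$ with $\xi_t = \int_0^t e^{\theta s}\,dB_s$, I note that $s\mapsto e^{\theta s}1_{[0,t]}(s)$ is a deterministic H\"older continuous function, so by (\ref{non random}) the Young integral coincides with $B(e^{\theta\cdot}1_{[0,t]})$. The covariance formula for such integrands recalled at the end of Section 2 then yields
\[
E\left(A_t^2\right) = e^{-2\theta t}\,H(2H-1)\int_0^t\int_0^t e^{\theta(u+v)}|u-v|^{2H-2}\,du\,dv.
\]
Here the kernel $|u-v|^{2H-2}$ is integrable across the diagonal because $2H-2 > -1$ for $H<1$, so the double integral is finite for every fixed $t$; the point is to control its growth as $t\to\infty$.

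Next I would reorganize the double integral so that the stabilizing prefactor $e^{-2\theta t}$ is put to work. By symmetry in $u,v$, restricting to $u<v$ and substituting $w=v-u$ turns the integral into $2\int_0^t e^{2\theta v}\bigl(\int_0^v e^{-\theta w}w^{2H-2}\,dw\bigr)\,dv$. The inner integral is nondecreasing in $v$ and bounded for every $v$ by the convergent integral $\int_0^\infty e^{-\theta w}w^{2H-2}\,dw = \theta^{1-2H}\Gamma(2H-1)$, which is finite precisely because $-1<2H-2<0$. Substituting this bound and carrying out the elementary outer integration,
\[
e^{-2\theta t}\int_0^t e^{2\theta v}\,dv = \frac{1-e^{-2\theta t}}{2\theta} \le \frac{1}{2\theta},
\]
I obtain $E(A_t^2) \le \tfrac{1}{2}H(2H-1)\theta^{-2H}\Gamma(2H-1)$, a constant independent of $t$. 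Combined with the bound on the deterministic term, this produces a constant $c$ depending only on $x_0$, $\theta$ and $H$, as claimed.

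The only delicate point is the bookkeeping in the third step: the integrand grows like $e^{2\theta v}$, and it is exactly the factor $e^{-2\theta t}$ in front, i.e.\ the mean-reverting effect of $\theta>0$, that absorbs this growth and makes the bound uniform in $t$ rather than blowing up. The integrable singularity of the kernel at the diagonal must be recorded but causes no genuine difficulty.
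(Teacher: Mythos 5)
Your proof is correct and follows essentially the same route as the paper: the same covariance formula for the Wiener integral $\xi_t$, the same reduction via the change of variables $w=v-u$ to the convergent integral $\int_0^\infty e^{-\theta w}w^{2H-2}\,dw=\theta^{1-2H}\Gamma(2H-1)$, and the same use of the mean-reverting prefactor $e^{-2\theta t}$ to absorb the exponential growth (the paper merely performs the two integrations in the opposite order, via Fubini). The only quibble is a harmless factor of $2$ in your final constant --- the computation actually gives $E(A_t^2)\le H(2H-1)\theta^{-2H}\Gamma(2H-1)=H\Gamma(2H)\theta^{-2H}$, matching the paper --- which is immaterial since the lemma only asserts finiteness.
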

\begin{proof}For any $ t>0$, we have
\begin{eqnarray}E\left(\xi_t^2\right) &=&H(2H-1)\int_0^t\int_0^te^{\theta x}e^{\theta
y}|x-y|^{2H-2} dx dy\nonumber\\&=& 2H(2H-1)\int_0^tdye^{\theta
y}\int_0^ydxe^{\theta x}(y-x)^{2H-2}\nonumber\\&=&
2H(2H-1)\int_0^tdye^{2\theta y}\int_0^ydze^{ -\theta
z}z^{2H-2}\nonumber\\&=&{ 2} H(2H-1)\int_0^tdze^{ -\theta
z}z^{2H-2}\int_z^tdye^{2\theta y}\nonumber\\&=&
 H(2H-1)\int_0^tdze^{ -\theta z}z^{2H-2}\frac{e^{2\theta
 t}-e^{2\theta
 z}}{\theta}\nonumber\\&\leq&H(2H-1)\frac{\Gamma(2H-1)}{\theta^{2H}}e^{2\theta
 t}\nonumber\\&=&\frac{H\Gamma(2H)}{\theta^{2H}}e^{2\theta
 t}.\label{sup xi^2}
\end{eqnarray}
By combining   (\ref{explicit solution}) and (\ref{sup xi^2}), we
obtain that for any $t>0$
\begin{eqnarray*}E\left(X_t^2\right)
\leq2\left(x_0^2+\frac{H\Gamma(2H)}{\theta^{2H}}\right).
\end{eqnarray*}
This proves the claim.
\end{proof}
\subsection{Consistency of the  LSE}
 The next statement provides consistency  of the
LSE  $\widehat{\theta}_n$ of $\theta$.
\begin{theorem}\label{consistency}Assume $H\in(1/2,1)$ and $\theta>0$. Then, if $\Delta_{n} \rightarrow 0$ and $n\Delta_{n}\rightarrow
\infty$  as $n\rightarrow \infty$, we have
\begin{eqnarray}\widehat{\theta}_n \rightarrow\theta\quad \mbox{ in probability as  }n\rightarrow \infty.
\end{eqnarray}
\end{theorem}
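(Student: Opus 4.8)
The plan is to write $\widehat{\theta}_n-\theta$ as a ratio, to treat the numerator and denominator separately, and to conclude by Slutsky's theorem. Dividing the numerator and denominator in (\ref{representation estimator-theta}) by $T_n$, set
\[
N_n=\frac{1}{T_n}\sum_{i=1}^n X_{t_{i-1}}U_i,\qquad D_n=\frac{\Delta_n}{T_n}\sum_{i=1}^n X_{t_{i-1}}^2,
\]
so that $\widehat{\theta}_n-\theta=-N_n/D_n$. It then suffices to prove that $N_n\to0$ in probability and $D_n\to\sigma^2$ in probability, where $\sigma^2:=H\Gamma(2H)\theta^{-2H}>0$; since $\sigma^2>0$, Slutsky's theorem yields $\widehat{\theta}_n-\theta\to0$ in probability.

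For the numerator, the key observation is that integrating (\ref{eqOU}) over $[t_{i-1},t_i]$ gives $U_i=X_{t_i}-X_{t_{i-1}}$, so the elementary identity $a(b-a)=\tfrac12(b^2-a^2)-\tfrac12(b-a)^2$ provides the discrete counterpart of (\ref{change of variables formula}):
\[
\sum_{i=1}^n X_{t_{i-1}}U_i=\frac12\left(X_{T_n}^2-x_0^2\right)-\frac12\sum_{i=1}^n\left(X_{t_i}-X_{t_{i-1}}\right)^2.
\]
After dividing by $T_n$, the first term tends to $0$ in $L^1$ because $E(X_{T_n}^2)$ is bounded by Lemma \ref{supX^2<infty} while $T_n\to\infty$. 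For the second term I would use $(X_{t_i}-X_{t_{i-1}})^2\le2\theta^2(\int_{t_{i-1}}^{t_i}X_s\,ds)^2+2(B_{t_i}-B_{t_{i-1}})^2$: by Cauchy--Schwarz the drift part is bounded by $2\theta^2\Delta_n\int_0^{T_n}X_s^2\,ds$, so its $T_n$-normalization is $O(\Delta_n)$ times the bounded quantity $T_n^{-1}\int_0^{T_n}X_s^2\,ds$, while $E\left(\sum_{i=1}^n(B_{t_i}-B_{t_{i-1}})^2\right)=n\Delta_n^{2H}=T_n\Delta_n^{2H-1}$, whose $T_n$-normalization is $\Delta_n^{2H-1}\to0$ since $H>\tfrac12$. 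Hence $N_n\to0$ in probability.

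For the denominator I would first replace the Riemann sum by an integral, showing $D_n-T_n^{-1}\int_0^{T_n}X_s^2\,ds\to0$ in $L^1$; writing the difference as $T_n^{-1}\sum_i\int_{t_{i-1}}^{t_i}(X_{t_{i-1}}^2-X_s^2)\,ds$ and bounding $E|X_{t_{i-1}}^2-X_s^2|$ by the H\"older regularity of $X$ together with Lemma \ref{supX^2<infty} gives an error of order $\Delta_n^{H}$. It then remains to show $T_n^{-1}\int_0^{T_n}X_s^2\,ds\to\sigma^2$. The mean converges to $\sigma^2$ by a Ces\`aro argument, since a computation as in the proof of Lemma \ref{supX^2<infty} shows $E(X_s^2)\to H\Gamma(2H)\theta^{-2H}$ as $s\to\infty$. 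For the variance, Gaussianity of $X$ gives $\mathrm{Cov}(X_s^2,X_t^2)=2\,\mathrm{Cov}(X_s,X_t)^2$ up to exponentially small corrections, so I must bound $T_n^{-2}\int_0^{T_n}\int_0^{T_n}\mathrm{Cov}(X_s,X_t)^2\,ds\,dt$; using the decay $\mathrm{Cov}(X_s,X_t)\trianglelefteqslant|s-t|^{2H-2}$ this double integral grows at most like $T_n$ when $H<\tfrac34$ and like $T_n^{4H-2}$ (up to a logarithmic factor) when $H\ge\tfrac34$, so the variance is $O(T_n^{-1})$, respectively $O(T_n^{4H-4})$, and vanishes throughout $H\in(\tfrac12,1)$. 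Thus $D_n\to\sigma^2$ in $L^2$.

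I expect the denominator, and specifically the variance estimate for $T_n^{-1}\int_0^{T_n}X_s^2\,ds$, to be the main obstacle: the long-range dependence of $B$ for $H>\tfrac12$ forces one to track the polynomial decay rate of $\mathrm{Cov}(X_s,X_t)$ and to distinguish the regimes $H<\tfrac34$ and $H\ge\tfrac34$, whereas the numerator is handled cleanly by the discrete change-of-variables identity. This is also where the phrase ``without any kind of ergodicity'' enters, as the required convergence of the time average is obtained directly from these moment bounds rather than from an ergodic theorem.
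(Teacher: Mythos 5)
Your proof is correct, but it takes a genuinely different route from the paper's. For the numerator you exploit the exact discrete identity $\sum_{i} X_{t_{i-1}}(X_{t_i}-X_{t_{i-1}})=\frac12(X_{T_n}^2-x_0^2)-\frac12\sum_i(X_{t_i}-X_{t_{i-1}})^2$ together with elementary moment bounds (the quadratic-variation term contributes $n\Delta_n^{2H}/T_n=\Delta_n^{2H-1}\to0$), entirely avoiding Malliavin calculus; the paper instead splits $\sum_i X_{t_{i-1}}U_i$ into a drift-discretization error, a Skorohod-discretization error controlled via the $L^{1/H}$ bound (\ref{majoration second moment of skorohod}), and the Skorohod integral $\int_0^{T_n}X_t\delta B_t$, whose second moment is $O(T_n)$ by \cite{HN}. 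For the denominator the Riemann-sum replacement step is essentially identical to the paper's estimate of $j_{2,1}(n)$, but where you compute the variance of $T_n^{-1}\int_0^{T_n}X_t^2\,dt$ directly through the Gaussian identity $\Cov(X_s^2,X_t^2)\approx 2\Cov(X_s,X_t)^2$ and the polynomial decay of the covariance, the paper sidesteps that computation: it combines the change-of-variables formula (\ref{change of variables formula}) with the Young--Skorohod correction (\ref{link Young-Skorohod via alpha_n}) to get $\theta\int_0^{T_n}X_t^2\,dt-\alpha_n=\int_0^{T_n}X_t\delta B_t-\frac12(X_{T_n}^2-x_0^2)$ and reuses the same second-moment bound on the Skorohod integral. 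Your route is more elementary and self-contained for consistency alone, and it handles $H\ge\frac34$ transparently by tracking the $T_n^{4H-2}$ growth of the double integral; its one real debt is the decay estimate $\Cov(X_s,X_t)\trianglelefteqslant\min\left(1,|s-t|^{2H-2}\right)$, which is true (essentially Cheridito--Kawaguchi--Maejima, up to exponentially small corrections from the non-stationary start) but is asserted rather than proved, and near the diagonal you must indeed use the bounded cap, since $|s-t|^{4H-4}$ is not locally integrable there when $H<\frac34$. What the paper's detour through the Skorohod integral buys is the set of quantitative bounds (\ref{majoration of j_{1,1}(n)^0}), (\ref{majoration of j_{1,2}(n)^0}) and (\ref{majoration of j_{2,2}(n)}), which are recycled verbatim in the Berry--Ess\'een estimates of Theorems \ref{main result} and \ref{main result second}; your decomposition would not directly provide those, though it is arguably the cleaner way to prove consistency by itself.
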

\begin{proof} From (\ref{representation estimator-theta}), we can
write
\begin{eqnarray*}
\widehat{\theta}_{n}-\theta&=&-\frac{\frac{\theta}{\alpha_{n}}\sum_{i=1}^{n}
U_i}{\frac{\theta\Delta_{n}}{\alpha_n}\sum_{i=1}^{n} X_{t_{i-1}}^2}.
\end{eqnarray*}
Let $0<\rho<1$. We have
\begin{eqnarray*}
P\left(\left|\widehat{\theta}_{n}-\theta\right|>\rho\right)&=&P\left(\left|
\frac{\frac{\theta}{\alpha_{n}}\sum_{i=1}^{n}
U_i}{\frac{\theta\Delta_{n}}{\alpha_n}\sum_{i=1}^{n}
X_{t_{i-1}}^2}\right|>\rho\right)\\&\leq&P\left(\left|
 \frac{\theta}{\alpha_{n}}\sum_{i=1}^{n}
U_i \right|>\rho(1-\rho)\right)+P\left(\left|
\frac{\theta\Delta_{n}}{\alpha_n}\sum_{i=1}^{n}
X_{t_{i-1}}^2 -1\right|>\rho\right)\\
&:=&j_1(n)+j_2(n).
\end{eqnarray*}
We begin {  by} studying   the term $j_1(n)$. We have,
\begin{eqnarray*}j_1(n)&=&
P\left(\left|
 \frac{\theta}{\alpha_{n}}\sum_{i=1}^{n}
U_i \right|{ >}\rho(1-\rho)\right)\\&=& P\left(\left|
 \frac{\theta}{\alpha_{n}}\sum_{i=1}^{n}
[U_i-\int_{t_{i-1}}^{t_{i}}X_{t_{i-1}}{\delta}B_t]+\frac{\theta}{\alpha_{n}}
\sum_{i=1}^{n}\int_{t_{i-1}}^{t_{i}}(X_{t_{i-1}}-X_t){\delta}B_t\right.
\right.\\&&\left.\left.\qquad\qquad\qquad+\frac{\theta}{\alpha_{n}}\int_0^{T_n}
X_t{\delta}B_t \right|{ >}\rho(1-\rho)\right)
\\&\leq&P\left(
 \frac{\theta}{\alpha_{n}}\left|\sum_{i=1}^{n}
[U_i-\int_{t_{i-1}}^{t_{i}}X_{t_{i-1}}{\delta}B_t]\right|{
>}\frac{1}{3}\rho(1-\rho)\right)\\&&+ P\left(
 \frac{\theta}{\alpha_{n}}\left|\sum_{i=1}^{n}\int_{t_{i-1}}^{t_{i}}(X_{t_{i-1}}-X_t){\delta}B_t\right|{ >}\frac{1}{3}\rho(1-\rho)\right)
\\&&+P\left(
 \frac{\theta}{\alpha_{n}}\left|\int_0^{T_n}
X_t{\delta}B_t \right|{ >}\frac{1}{3}\rho(1-\rho)\right)
\\&:=&j_{1,1}(n)+j_{1,2}(n)+j_{1,3}(n).
\end{eqnarray*}
For the term $j_{1,1}(n)$, by using Lemma \ref{supX^2<infty} and the
fact that for every $t>0$
\begin{eqnarray}\label{X_ti-X_t}
 X_{t_{i-1}}-X_t&=&(e^{-\theta t_{i-1}}-e^{-\theta t })(x_0+\xi_{t_{i-1}})+e^{-\theta t
 }(\xi_{t_{i-1}}-\xi_{t }),
\end{eqnarray}
we obtain
\begin{eqnarray}
  \sum_{i=1}^{n}
E\left|U_i-\int_{t_{i-1}}^{t_{i}}X_{t_{i-1}}{\delta}B_t\right|&=&\sum_{i=1}^{n}
E\left|X_{t_{i-1}} \int_{t_{i-1}}^{{t_i}}\theta(X_{t_{i-1}}-X_t)dt
\right|\nonumber
 \\&\leq&\theta\sum_{i=1}^{n}
(E [X_{t_{i-1}}^2])^{1/2}
\int_{t_{i-1}}^{{t_i}}\left(E\left(\left[X_{t_{i-1}}-X_t\right]^2\right)\right)^{1/2}dt
\nonumber\\&\trianglelefteqslant& \sum_{i=1}^{n}
 \int_{t_{i-1}}^{{t_i}}\left(E\left([X_{t_{i-1}}-X_t]^2\right)\right)^{1/2}dt \label{estimation of X_ti-X_t}\\
&\trianglelefteqslant&  \sum_{i=1}^{n}
\int_{t_{i-1}}^{{t_i}}[e^{-\theta t_{i-1}}-e^{-\theta t
}] \left(E\left([x_0+\xi_{  t_{i-1} }]^2\right)\right)^{1/2}dt \nonumber\\
&&+   \sum_{i=1}^{n} \int_{t_{i-1}}^{{t_i}}e^{-\theta t}\left(
E\left( [\xi_t-\xi_{ t_{i-1} } ]^2\right)\right)^{1/2}dt
\nonumber\\
&\trianglelefteqslant&   \sum_{i=1}^{n} \int_{t_{i-1}}^{{t_i}}
[1-e^{-\theta( t-t_{i-1})
}]   dt  \nonumber\\
&&+  \sum_{i=1}^{n}  \int_{t_{i-1}}^{{t_i}}e^{-\theta
t}\left(E\left( [\xi_t-\xi_{ t_{i-1} } ]^2\right)\right)^{1/2}dt.
\nonumber
\end{eqnarray}
Making the change of variables $s= t-t_{i-1}$, we obtain
\begin{eqnarray*}
\sum_{i=1}^{n}  \int_{t_{i-1}}^{{t_i}}  [1-e^{-\theta( t-t_{i-1}) }]
 dt  &=&n
\int_{0}^{\Delta_{n}} [1-e^{-\theta s }]    ds  \\
&=&n\Delta_{n}^{2}   \frac{\int_{0}^{\Delta_{n}} [1-e^{-\theta s }]
 ds}{\Delta_{n}^{2}}  \\&\trianglelefteqslant&n \Delta_{n}^{2}
\end{eqnarray*} where the last estimate comes from the fact that  $ \int_{0}^{\Delta_{n}}
[1-e^{-\theta s }] ds/\Delta_{n}^{2} \rightarrow { \theta}/2$ as
$\Delta_{n}\rightarrow 0$ (by L'H\^opital's rule). On the other
hand, by the change of variables $u=\frac{x-t_{i-1}}{t-t_{i-1}}$,
$v=\frac{y-t_{i-1}}{t-t_{i-1}}$ and $s= t-t_{i-1} $
\begin{eqnarray*}&&\sum_{i=1}^{n}
\int_{t_{i-1}}^{{t_i}}e^{-\theta t}\left(E( [\xi_t-\xi_{ t_{i-1} }
]^2)\right)^{1/2}dt \\&=&\sqrt{H(2H-1)}
 \sum_{i=1}^n\int_{t_{i-1}}^{t_{i }}dt\ e^{
-\theta t}\left(\int_{t_{i-1}}^{t }dy e^{ \theta y}\int_{t_{i-1}}^{t
}dx e^{ \theta x}|x-y|^{2H-2}  \right)^{1/2}
\\&=&\sqrt{H(2H-1)}
 \sum_{i=1}^n\int_{t_{i-1}}^{t_{i }}dte^{
-\theta (t-t_{i-1})}\left(\int_{t_{i-1}}^{t }dye^{  \theta(y-
t_{i-1})}\int_{t_{i-1}}^{t }dxe^{
 \theta(x-t_{i-1} )}|x-y|^{2H-2}    \right)^{1/2}
 \\&=&\sqrt{H(2H-1)}
 \sum_{i=1}^n\int_{t_{i-1}}^{t_{i }}dt(t-t_{i-1})^{H}e^{
-\theta (t-t_{i-1})}\left(\int_{0}^{1 }dve^{
\theta(t-t_{i-1})v}\int_{0}^{1 }du e^{
 \theta(t-t_{i-1})u}|u-v|^{2H-2}   \right)^{1/2}
 \\&=&n\sqrt{H(2H-1)}
 \int_{0}^{\Delta_{n}} ds\ s^{H}e^{
-\theta  s}\left(\int_{0}^{1 }\int_{0}^{1 }e^{
 \theta\Delta_{n} s u}e^{  \theta\Delta_{n} s v}|u-v|^{2H-2} du dv\right)^{1/2}
 \\&\trianglelefteqslant&n\int_{0}^{\Delta_{n}} \ s^{H}ds
 \\&\trianglelefteqslant&n\Delta_{n}^{ H+1}.
\end{eqnarray*} Hence, we obtain that
\begin{eqnarray}
  \sum_{i=1}^{n}
E\left|U_i-\int_{t_{i-1}}^{t_{i}}X_{t_{i-1}}{\delta}B_t\right|&\trianglelefteqslant&n\left(
\Delta_{n}^{2}+ \Delta_{n}^{ H+1}
\right)\nonumber\\&\trianglelefteqslant& n
 \Delta_{n}^{ H+1}\label{majoration of j_{1,1}(n)^0},
\end{eqnarray}
which leads to
\begin{eqnarray*}
 \frac{\theta}{\alpha_{n}} \sum_{i=1}^{n}
E\left|U_i-\int_{t_{i-1}}^{t_{i}}X_{t_{i-1}}{\delta}B_t\right|\trianglelefteqslant
\frac{n \Delta_{n}^{ H+1}}{T_n}=\Delta_{n}^{H}.
\end{eqnarray*}
Consequently,
\begin{eqnarray}\label{majoration of j_{1,1}(n)}
j_{1,1}(n)=P\left(
 \frac{\theta}{\alpha_{n}}\left|\sum_{i=1}^{n}
[U_i-\int_{t_{i-1}}^{t_{i}}X_{t_{i-1}}{\delta}B_t]\right|{
>}\frac{1}{3}\rho(1-\rho)\right)
 \trianglelefteqslant \frac{\Delta_{n}^{H}}{\rho(1-\rho)}.
\end{eqnarray}
For the term $j_{1,2}(n)$, from (\ref{X_ti-X_t}), we have
\begin{eqnarray*}
 E\left|\sum_{i=1}^{n}\int_{t_{i-1}}^{t_{i}}(X_{t_{i-1}}-X_t){\delta}B_t\right|&\leq&
 E\left|\sum_{i=1}^{n}\int_{t_{i-1}}^{t_{i}}\left((e^{-\theta t_{i-1}}-e^{-\theta t })(x_0+\xi_{t_{i-1}})\right)
 {\delta}B_t\right|
 \\&&+E\left|\sum_{i=1}^{n}\int_{t_{i-1}}^{t_{i}}\left(e^{-\theta t
 }(\xi_{t_{i-1}}-\xi_{t })\right){\delta}B_t\right|.
\end{eqnarray*}
Using the inequality (\ref{majoration second moment of skorohod}){ ,
$E\xi_t=0$ and $D_s\xi_t=e^{\theta s}1_{[0,t]}(s)$}, we can write
\begin{eqnarray*}
&&E\left|\sum_{i=1}^{n}\int_{t_{i-1}}^{t_{i}}\left((e^{-\theta
t_{i-1}}-e^{-\theta t
})(x_0+\xi_{t_{i-1}})\right){\delta}B_t\right|\\&=&E\left|\int_{0}^{T_n}\sum_{i=1}^{n}\left((e^{-\theta
t_{i-1}}-e^{-\theta t
})(x_0+\xi_{t_{i-1}})\right)1_{(t_{i-1},t_i]}(t){\delta}B_t\right|
\\&{ \leq}&{ \left(E\left(\left|\int_{0}^{T_n}\sum_{i=1}^{n}\left((e^{-\theta
t_{i-1}}-e^{-\theta t
})(x_0+\xi_{t_{i-1}})\right)1_{(t_{i-1},t_i]}(t){\delta}B_t\right|^2\right)\right)^{1/2}}\\&\leq&c_H\left[
{  \left(\int_{0}^{T_n}\left|\sum_{i=1}^{n}x_0(e^{-\theta
t_{i-1}}-e^{-\theta t })1_{(t_{i-1},t_i]}(t)
\right|^{1/H}dt\right)^{H}}\right.\\&&+\left.\left(\int_{0}^{T_n}\int_{0}^{T_n}\left|\sum_{i=1}^{n}(e^{-\theta
t_{i-1}}-e^{-\theta t })D_s\xi_{t_{i-1}}1_{(t_{i-1},t_i]}(t)
\right|^{1/H}dsdt\right)^{H}\right]\\&=&c_H\left[ {
|x_0|\left(\int_{0}^{T_n}\sum_{i=1}^{n}\left|e^{-\theta
t_{i-1}}-e^{-\theta t }\right|^{1/H}1_{(t_{i-1},t_i]}(t)
dt\right)^{H}}\right.\\&&+\left.
 \left(\int_{0}^{T_n}\int_{0}^{T_n}\sum_{i=1}^{n}\left|(e^{-\theta
t_{i-1}}-e^{-\theta t })e^{\theta s}\right|^{1/H}1_{[0,t_{i-1}
]}(s)1_{(t_{i-1},t_i]}(t) dsdt\right)^{H}\right]\\&=&c_H\left[ {
\left( |x_0|\sum_{i=1}^{n}\int_{t_{i-1}}^{t_{i}}e^{-\theta
t_{i-1}}\left|1-e^{-\theta (t-t_{i-1}) }\right|^{1/H}
dt\right)^{H}}\right.\\&&+\left.
 \left(\sum_{i=1}^{n}\int_{t_{i-1}}^{t_{i}}dt \left[(1 -e^{-\theta
(t-t_{i-1}) } \right]^{1/H}\int_{0}^{t_{i-1}}ds e^{-\theta
(t_{i-1}-s)/H} \right)^{H}\right]\\&=&c_H\left[{
|x_0|\left(\sum_{i=1}^{n}\int_{0}^{\Delta_n}e^{-\theta
t_{i-1}}\left|1-e^{-\theta s }\right|^{1/H}
ds\right)^{H}}\right.\\&&+\left.\left(\sum_{i=1}^{n}
 \int_{0}^{\Delta_{n}} dv \left[ 1 -e^{-\theta
v } \right]^{1/H}\int_{0}^{t_{i-1}}du e^{-\theta u/H}
\right)^{H}\right]\\&\leq&c_H{
(|x_0|+1)}n^H\left(\int_{0}^{\Delta_n}\left|1-e^{-\theta s
}\right|^{1/H} ds\right)^{H}\\& \trianglelefteqslant &
n^H\Delta_{n}^{H+1}
\end{eqnarray*}because $ \int_{0}^{\Delta_{n}}  \left[ 1 -e^{-\theta
v } \right]^{1/H}dv/\Delta_{n}^{1+1/H}
\rightarrow\theta^{1/H}/(1+1/H)$ as $\Delta_{n}\rightarrow 0$ (by
L'H\^opital's rule). Similarly, we have
\begin{eqnarray*}
E\left|\sum_{i=1}^{n}\int_{t_{i-1}}^{t_{i}}e^{-\theta t
 }(\xi_{t_{i-1}}-\xi_{t }){\delta}B_t\right|&=&E\left|\int_{0}^{T_n}\sum_{i=1}^{n}e^{-\theta t
 }(\xi_{t_{i-1}}-\xi_{t })1_{(t_{i-1},t_i]}(t){\delta}B_t\right|\\&\leq&c_H
 \left(\int_{0}^{T_n}\int_{0}^{T_n}\left|\sum_{i=1}^{n}e^{-\theta t
 }D_s(\xi_{t_{i-1}}-\xi_{t })1_{(t_{i-1},t_i]}(t)
\right|^{1/H}dsdt\right)^{H}
\\&=&c_H
 \left(\int_{0}^{T_n}\int_{0}^{T_n}\sum_{i=1}^{n}\left|e^{-\theta t
 }D_s(\xi_{t_{i-1}}-\xi_{t })
\right|^{1/H}1_{(t_{i-1},t_i]}(t)dsdt\right)^{H}
\\&=&c_H
 \left(\int_{0}^{T_n}\int_{0}^{T_n}\sum_{i=1}^{n}\left|e^{-\theta t
 }e^{\theta s}
\right|^{1/H}1_{[{t_{i-1}} ,{t
}]}(s)1_{(t_{i-1},t_i]}(t)dsdt\right)^{H}\\&=&c_H
 \left(\sum_{i=1}^{n}\int_{t_{i-1}}^{t_i}dt\int_{t_{i-1}}^{t}ds e^{-\theta
 (t-s)/H}\right)^{H}\\&=&c_H
 \left(\sum_{i=1}^{n}\int_{t_{i-1}}^{t_i}dt\int_{0}^{t-t_{i-1}}du e^{-\theta
  u/H}\right)^{H}\\&=&c_Hn^H
 \left(\int_{0}^{\Delta_{n}} dv\int_{0}^{v}du e^{-\theta
  u/H}\right)^{H}\\&=&c_Hn^H
 \left(\int_{0}^{\Delta_{n}}  \frac{[1- e^{-\theta
   v/H}]}{\theta/H}dv\right)^{H}
   \\&\trianglelefteqslant&
  n^{H}\Delta_{n}^{2H}.
\end{eqnarray*}
Thus,
\begin{eqnarray}
E\left|\sum_{i=1}^{n}\int_{t_{i-1}}^{t_{i}}(X_{t_{i-1}}-X_t){\delta}B_t\right|&
\trianglelefteqslant&
 n^H\left(\Delta_{n}^{H+1}+\Delta_{n}^{2H}\right)\nonumber
 \\&\trianglelefteqslant&n^H \Delta_{n}^{{ {2H}}}.\label{majoration of j_{1,2}(n)^0}
\end{eqnarray}
Therefore,
\begin{eqnarray*}
\frac{\theta}{\alpha_{n}}E\left|\sum_{i=1}^{n}\int_{t_{i-1}}^{t_{i}}(X_{t_{i-1}}-X_t){\delta}B_t\right|
&\trianglelefteqslant& \frac{\Delta_{n}^{{ {2H-1}}}}{n^{1-H}}.
\end{eqnarray*}
As consequence, \begin{eqnarray} \label{majoration of
j_{1,2}(n)}j_{1,2}(n)=P\left(
 \frac{\theta}{\alpha_{n}}\left|\sum_{i=1}^{n}\int_{t_{i-1}}^{t_{i}}(X_{t_{i-1}}-X_t){\delta}B_t\right|
 >\frac{1}{3}\rho(1-\rho)\right)
&\trianglelefteqslant& \frac{\Delta_{n}^{{
{2H-1}}}}{n^{1-H}\rho(1-\rho)}.
\end{eqnarray}
For the term $j_{1,3}(n)$, by setting
\begin{eqnarray}\label{F_{T_n}}F_{T_n}=\frac{1}{\sqrt{T_n}}\int_0^{T_n}
{ A_t}{\delta}B_t
\end{eqnarray} we have
\begin{eqnarray}j_{1,3}(n)&=&P\left(
 \frac{\theta}{\alpha_{n}}\left|\int_0^{T_n}
X_t{\delta}B_t \right|>\frac{1}{3}\rho(1-\rho)\right)\nonumber\\
&\leq& { \left[\frac{ {3\theta}}{\alpha_{n}[\rho(1-\rho)]}
\right]^2E\left(\left|\int_0^{T_n} X_t{\delta}B_t
\right|^2\right)\nonumber}\\&\leq& { 2\left[\frac{
{3\theta}}{\alpha_{n}[\rho(1-\rho)]}
\right]^2\left[E\left(\left|\int_0^{T_n} x_0e^{-\theta t}{\delta}B_t
\right|^2\right)+E\left(\left|\int_0^{T_n} A_t{\delta}B_t
\right|^2\right)\right].}\label{calcul majoration of j_{1,3}(n)}
\end{eqnarray}
{  Recall that the following convergence holds (see \cite{HN} for
further details)
\begin{eqnarray}\label{limit of F_T^2}
E(F_{T_n}^2)\rightarrow A(\theta,H),\ \mbox{ as }
n\rightarrow\infty,
\end{eqnarray}where $$ A(\theta,H)=\theta^{1-4H}\left(H^2(4H-1)
\left[\Gamma(2H)^2+\frac{\Gamma(2H)\Gamma(3-4H)\Gamma(4H-1)}{\Gamma(2-2H)}\right]\right).$$
On the other hand,
\begin{eqnarray}E\left(\left|\int_0^{T_n} e^{-\theta t}{\delta}B_t
\right|^2\right)&=&H(2H-1)\int_0^{T_n}\int_0^{T_n}e^{-\theta
t}e^{-\theta
s}|t-s|^{2H-2}dsdt\nonumber\\&=&2H(2H-1)\int_0^{T_n}\int_0^{t}e^{-\theta
t}e^{-\theta
s}|t-s|^{2H-2}dsdt\nonumber\\&=&2H(2H-1)\int_0^{T_n}\int_0^{t}e^{-2\theta
t}e^{\theta
r}r^{2H-2}drdt\nonumber\\&=&2H(2H-1)\int_0^{T_n}dre^{\theta
r}r^{2H-2}\int_r^{T_n}dt e^{-2\theta t}\nonumber\\&\leq&
\frac{H(2H-1)}{\theta}\int_0^{\infty}e^{-\theta
r}r^{2H-2}dr\nonumber\\&=&
\frac{H(2H-1)}{\theta^{2H}}\Gamma(2H-1)<\infty.\label{double
integral finite}
\end{eqnarray}}
Combining (\ref{calcul majoration of j_{1,3}(n)}), (\ref{limit of
F_T^2}), (\ref{double integral finite}) and (\ref{limit alpha_{n}})
we have that
\begin{eqnarray}j_{1,3}(n)
&\trianglelefteqslant&\frac{1}{[\rho(1-\rho)]^{2}T_n}\label{majoration
of j_{1,3}(n)}.
\end{eqnarray}
 Finally, by combining (\ref{majoration of
j_{1,1}(n)}), (\ref{majoration of j_{1,2}(n)}) and (\ref{majoration
of j_{1,3}(n)}), we conclude that
\begin{eqnarray}\label{majoration of j_1(n)}
j_1(n)\trianglelefteqslant \frac{1}{[\rho(1-\rho)]^{2}}
\left(\Delta_{n}^{H}+\frac{\Delta_{n}^{{
{2H-1}}}}{n^{1-H}}+\frac{1}{n\Delta_n}\right).
\end{eqnarray}
Consequently, to achieve the proof of Theorem \ref{consistency}, it
remains to estimate the term $j_2(n)$. We have
\begin{eqnarray*}j_2(n)&=&P\left(\left|\frac{\theta}{\alpha_{n}}\Delta_{n}\sum_{i=1}^{n}X_{t_{i-1}}^2-1\right|>\rho\right)
\\&\leq&P\left(\left|\frac{\theta}{\alpha_{n}}\sum_{i=1}^{n}\int_{t_{i-1}}^{t_i}[X_{t_{i-1}}^2-X_t^2]dt \right|>\rho/2\right)
+P\left(\left|\frac{\theta}{\alpha_{n}}\int_{0}^{T_n} X_t^2
dt-1\right|>\rho/2\right)\\&:=&j_{2,1}(n)+j_{2,2}(n).
\end{eqnarray*}
We first estimate  $j_{2,1}(n)$. By using similar arguments as in
(\ref{estimation of X_ti-X_t}), we get
\begin{eqnarray*}
E\left|\frac{\theta}{\alpha_{n}}
\sum_{i=1}^{n}\int_{t_{i-1}}^{t_i}[X_{t_{i-1}}^2-X_t^2]dt
\right|&\leq & \frac{\theta}{\alpha_{n}}
\sum_{i=1}^{n}\int_{t_{i-1}}^{t_i}E\left| X_{t_{i-1}}^2-X_t^2
\right|dt\\&\leq &
\frac{2\theta\sup_{t\geq0}(E[X_t^2])^{1/2}}{\alpha_n}
\sum_{i=1}^{n}\int_{t_{i-1}}^{t_i}\left(E\left(\left|
X_{t_{i-1}}-X_t\right|^2\right)\right)^{1/2}dt\\
&\trianglelefteqslant&\frac{n\Delta_n^{2}+n\Delta_n^{H+1}}{T_n}
\\
&\trianglelefteqslant& \Delta_n^{H}
\end{eqnarray*}
 which implies that
\begin{eqnarray}\label{majoration of j_{2,1}(n)} j_{2,1}(n)&=& P\left(\left|\frac{\theta}{\alpha_{n}}
\sum_{i=1}^{n}\int_{t_{i-1}}^{t_i}[X_{t_{i-1}}^2-X_t^2]dt
\right|>\rho/2\right)\nonumber
\\&\trianglelefteqslant&\frac{\Delta_n^{H}}{\rho}.
\end{eqnarray}
We now study $j_{2,2}(n)$.
 Applying the change of variable formula
(\ref{change of variables formula}) leads to
\begin{eqnarray}\label{X^2 decomposition}
2\int_0^{T_n} X_tdB_t &=& X_{T_n}^2-x_0^2+2\theta  \int_0^{T_n}
X_t^2 dt.
\end{eqnarray}
Combining (\ref{link Young-Skorohod via alpha_n}) and (\ref{X^2
decomposition}) we obtain
\begin{eqnarray*}2\theta  \int_0^{T_n} X_t^2 dt-2\alpha_{n}=2\int_0^{T_n}
X_t\delta B_t-X_{T_n}^2+x_0^2.
\end{eqnarray*}
Hence
\begin{eqnarray}\label{int X_t^2-1}  \frac{\theta}{\alpha_{n}}  \int_0^{T_n} X_t^2 dt-1
=\frac{1}{\alpha_{n}}\int_0^{T_n} X_t\delta
B_t-\frac{1}{2\alpha_{n}}( X_{T_n}^2-x_0^2).
\end{eqnarray}
We deduce from  (\ref{int X_t^2-1}) and (\ref{limit of F_T^2})
together with (\ref{limit alpha_{n}}) and (\ref{double integral
finite}) that
\begin{eqnarray}
j_{2,2}(n)&=&P\left(\left|\frac{\theta}{\alpha_{n}}\int_{0}^{T_n}
X_t^2 dt-1\right|>\rho/2\right)\\&{ =}&
P\left(\left|\frac{1}{\alpha_{n}}\int_0^{T_n} X_t\delta
B_t-\frac{1}{2\alpha_{n}}(
X_{T_n}^2-x_0^2)\right|>\rho/2\right)\nonumber\\&{ \leq}&
P\left(\left|\frac{1}{\alpha_{n}}\int_0^{T_n} X_s\delta B_s
\right|>\rho/{ 4}\right)+P\left(\left|\frac{1}{2\alpha_{n}}(
X_{T_n}^2-x_0^2)\right|>\rho/{ 4}\right)\nonumber\\&\leq&{
2\left[\frac{ {4}}{\alpha_{n}\rho}
\right]^2\left[E\left(\left|\int_0^{T_n} x_0e^{-\theta t}{\delta}B_t
\right|^2\right)+E\left(\left|\int_0^{T_n} A_t{\delta}B_t
\right|^2\right)\right]}\nonumber\\&&{ +\left[\frac{
4}{\alpha_{n}\rho} \right] E|X_{T_n}^2-x_0^2|} \nonumber
\\&\trianglelefteqslant&  \left[\frac{
{T_n}}{\alpha_{n}} \right]^2\frac{{
1+E(F_{T_n}^2)}}{\rho^{2}T_n}+\left[\frac{ {T_n}}{\alpha_{n}}
\right] \frac{E|X_{T_n}^2-x_0^2|
}{\rho{T_n}}\nonumber\\&\trianglelefteqslant&
\left(\frac{1}{\rho^{2}{T_n}} +\frac{1}{\rho {T_n} }
\right)\nonumber\\&\trianglelefteqslant&
\frac{1}{\rho^{2}{T_n}}.\label{majoration of j_{2,2}(n)}
\end{eqnarray}
Therefore, from (\ref{majoration of j_{2,1}(n)}) and
(\ref{majoration of j_{2,2}(n)}), we obtain
\begin{eqnarray}\label{majoration of j_2(n)}
j_2(n) \trianglelefteqslant\frac{\Delta_n^H}{\rho}+
\frac{1}{\rho^{2}{T_n}}.
\end{eqnarray}
Finally, combining (\ref{majoration of j_1(n)}) and (\ref{majoration
of j_2(n)}), the proof of Theorem \ref{consistency} is done.
\end{proof}

\subsection{Rate of convergence of the  LSE}
This paragraph is devoted to derive Berry-Ess\'een-type bounds in
the Kolmogorov distance for the LSE $\widehat{\theta}_n$ of
$\theta$. We first recall the following technical lemmas.
\begin{lemma}[{\cite[Page 78]{MP}}]\label{lemma quotient}Let $f$ and $g$ be  two
real-valued random variables with $g\neq0$ $P$-a.s. Then, for any
$\delta>0$,
\begin{eqnarray*}d_{\mbox{\tiny{Kol}}}\left(\frac{f}{g},N\right)\leq
d_{\mbox{\tiny{Kol}}}\left(f,N\right)+P(|g-1|>\delta)+\delta.
\end{eqnarray*} where $N\sim\mathcal{N}(0,1)$.
\end{lemma}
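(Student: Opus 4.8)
The plan is to reduce the statement for the ratio $f/g$ to the statement for $f$ alone by conditioning on the event where $g$ is close to $1$, and to absorb the resulting distortion of the cutoff level into a controlled increment of the standard normal cdf $\Phi(z)=P(N\le z)$. First I would dispose of the trivial range $\delta\ge1$: there the right-hand side is at least $1$, whereas $d_{\mbox{\tiny{Kol}}}(f/g,N)\le1$ always, so the inequality holds automatically. Hence I may assume $\delta\in(0,1)$, which guarantees that on the event $\{|g-1|\le\delta\}$ one has $g>0$, so that $\{f/g\le z\}=\{f\le zg\}$ there.

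The key elementary observation is that on $\{|g-1|\le\delta\}$ the level $zg$ is squeezed as
\[ z-\delta|z|\;\le\;zg\;\le\;z+\delta|z|, \]
which one checks separately for $z\ge0$ (using $1-\delta\le g\le1+\delta$) and for $z<0$ (where the sign of $z$ flips the inequalities). Using this I would split $\{f/g\le z\}$ according to whether $\{|g-1|\le\delta\}$ holds. For the upper bound, on the good event $\{f\le zg\}\subseteq\{f\le z+\delta|z|\}$, so
\[ P(f/g\le z)\le P(f\le z+\delta|z|)+P(|g-1|>\delta)\le \Phi(z+\delta|z|)+d_{\mbox{\tiny{Kol}}}(f,N)+P(|g-1|>\delta), \]
where the last step is just the definition of the Kolmogorov distance ($\Phi$ being the cdf of $N$). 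A symmetric argument, based on $\{f\le z-\delta|z|\}\cap\{|g-1|\le\delta\}\subseteq\{f\le zg\}$, yields the matching lower bound $P(f/g\le z)\ge \Phi(z-\delta|z|)-d_{\mbox{\tiny{Kol}}}(f,N)-P(|g-1|>\delta)$.

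It then remains to establish the uniform increment bound $|\Phi(z\pm\delta|z|)-\Phi(z)|\le\delta$ for every $z\in\mathbb{R}$ and $\delta\in(0,1)$; granting this and combining the two displayed estimates gives $|P(f/g\le z)-\Phi(z)|\le d_{\mbox{\tiny{Kol}}}(f,N)+P(|g-1|>\delta)+\delta$, and taking the supremum over $z$ finishes the proof. I expect this increment bound to be the only real obstacle. By the symmetry $\Phi(-t)=1-\Phi(t)$ it reduces to the two estimates $\sup_{a\ge0}[\Phi((1+\delta)a)-\Phi(a)]\le\delta$ and $\sup_{a\ge0}[\Phi(a)-\Phi((1-\delta)a)]\le\delta$. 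The first is immediate, since $\Phi((1+\delta)a)-\Phi(a)\le \delta a\,\Phi'(a)\le\delta\sup_{a\ge0}a\,\Phi'(a)=\delta/\sqrt{2\pi e}<\delta$. The second is more delicate, because the interval $[(1-\delta)a,a]$ sits where the density is larger; here I would argue by cases. For $\delta\ge\frac12$ one has $\Phi(a)-\Phi((1-\delta)a)\le\Phi(a)-\Phi(0)<\frac12\le\delta$, while for $\delta<\frac12$ the left-endpoint bound gives $\Phi(a)-\Phi((1-\delta)a)\le\delta a\,\Phi'((1-\delta)a)\le\frac{\delta}{1-\delta}\cdot\frac{1}{\sqrt{2\pi e}}<\delta$, using $1-\delta>\frac12>\frac{1}{\sqrt{2\pi e}}$. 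This case split is the sole technical point; everything else is bookkeeping with the definition of the Kolmogorov distance.
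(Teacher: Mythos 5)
The paper does not actually prove this lemma; it is quoted verbatim from Michael and Pfanzagl \cite{MP} with only a page reference, so there is no in-paper argument to compare against. Your proof is correct and is essentially the standard argument behind the cited result: split $\{f/g\le z\}$ on the event $\{|g-1|\le\delta\}$, squeeze $zg$ between $z\pm\delta|z|$, convert to statements about $f$ via the Kolmogorov distance, and absorb the cutoff distortion using $\sup_{z}\left|\Phi\left(z\pm\delta|z|\right)-\Phi(z)\right|\le\delta$. Your handling of the one genuinely delicate point --- the increment bound on the side $\Phi(a)-\Phi((1-\delta)a)$, where the density is evaluated at the left endpoint and the factor $1/(1-\delta)$ must be controlled, with the separate case $\delta\ge\tfrac12$ --- is correct, as is the reduction to $a\ge0$ via $\Phi(-t)=1-\Phi(t)$ and the preliminary dismissal of $\delta\ge1$. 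No gaps.
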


\begin{lemma}[{\cite[Page
280]{BGS}}]\label{lemma technical}Let $Y$ and $Z$ be  two
real-valued random variables. Then, for any $\eta>0$,
\begin{eqnarray*}d_{\mbox{\tiny{Kol}}}\left(Y+Z,N\right)\leq
d_{\mbox{\tiny{Kol}}}\left( Z ,N
\right)+P(|Y|>\eta)+\frac{\eta}{\sqrt{2\pi}},
\end{eqnarray*}
where $N\sim\mathcal{N}(0,1)$.
\end{lemma}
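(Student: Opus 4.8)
The plan is to reduce the Kolmogorov distance to pointwise estimates of distribution functions and to exploit that, on the event $\{|Y|\le\eta\}$, the sum $Y+Z$ is sandwiched between $Z-\eta$ and $Z+\eta$. Write $\Phi$ for the distribution function of $N\sim\mathcal{N}(0,1)$. I would fix $z\in\real$ and $\eta>0$, estimate $P(Y+Z\le z)-\Phi(z)$ from above and from below separately, and only take the supremum over $z$ at the very end.

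For the upper bound, decompose $\{Y+Z\le z\}$ according to whether $|Y|\le\eta$. On $\{|Y|\le\eta\}$ one has $Y\ge-\eta$, so $Y+Z\le z$ forces $Z=(Y+Z)-Y\le z+\eta$; hence
\[
P(Y+Z\le z)\le P(Z\le z+\eta)+P(|Y|>\eta).
\]
Inserting $\Phi(z+\eta)$ and using the definition of the Kolmogorov distance gives $P(Z\le z+\eta)-\Phi(z)\le d_{\mbox{\tiny{Kol}}}(Z,N)+\bigl(\Phi(z+\eta)-\Phi(z)\bigr)$. Since the standard Gaussian density is bounded by $1/\sqrt{2\pi}$, we have $\Phi(z+\eta)-\Phi(z)=\int_{z}^{z+\eta}\tfrac{1}{\sqrt{2\pi}}e^{-u^{2}/2}\,du\le \eta/\sqrt{2\pi}$, which yields the required upper estimate for $P(Y+Z\le z)-\Phi(z)$.

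For the lower bound I would use the reverse inclusion $\{Z\le z-\eta\}\cap\{|Y|\le\eta\}\subseteq\{Y+Z\le z\}$ (if $Z\le z-\eta$ and $Y\le\eta$, then $Y+Z\le z$), which gives $P(Y+Z\le z)\ge P(Z\le z-\eta)-P(|Y|>\eta)$; inserting $\Phi(z-\eta)$ and applying the same density bound to $\Phi(z)-\Phi(z-\eta)$ produces the matching lower estimate. Combining the two bounds gives $|P(Y+Z\le z)-\Phi(z)|\le d_{\mbox{\tiny{Kol}}}(Z,N)+P(|Y|>\eta)+\eta/\sqrt{2\pi}$ uniformly in $z$, and taking the supremum over $z\in\real$ finishes the proof. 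There is no genuine obstacle here: the argument is an elementary smoothing and sandwiching estimate, and the only points demanding care are getting the direction of the two event inclusions (and the sign of the $\pm\eta$ shift) right in each case, and recording the uniform bound $\sup_{u}\tfrac{1}{\sqrt{2\pi}}e^{-u^{2}/2}=1/\sqrt{2\pi}$ that is responsible for the $\eta/\sqrt{2\pi}$ term.
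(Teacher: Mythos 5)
Your proof is correct. Note that the paper itself gives no proof of this lemma at all --- it is simply quoted from Babu, Ghosh and Singh (1978), so there is nothing in the text to compare your argument against line by line. Your sandwiching argument is the standard one and it is carried out correctly: the two event inclusions $\{Y+Z\le z\}\cap\{|Y|\le\eta\}\subseteq\{Z\le z+\eta\}$ and $\{Z\le z-\eta\}\cap\{|Y|\le\eta\}\subseteq\{Y+Z\le z\}$ are both stated with the right direction and the right sign of the shift, the error terms $\Phi(z\pm\eta)-\Phi(z)$ are controlled by the uniform density bound $1/\sqrt{2\pi}$, which is exactly where the $\eta/\sqrt{2\pi}$ term comes from, and the supremum over $z$ is taken only after both one-sided bounds are established. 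This gives a short, self-contained verification of the cited inequality, which is arguably a service to the reader beyond what the paper provides.
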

Define
\[\lambda_n:=\frac{\alpha_n}{\theta T_n\sqrt{E(F_{T_n}^2)}},\quad n\geq1,\]
where $\alpha_n$ and $F_{T_n}$ are defined  by (\ref{alpha_n}) and (\ref{F_{T_n}}), respectively.\\
 The following result provides
explicit bounds for the Kolmogorov distance, in the case when
$H\in~(\frac{1}{2},\frac{3}{4})$, between the  law of
$\lambda_{n}\sqrt{T_n} (\widehat{\theta}_{n}-\theta)$ and the
standard normal law.
\begin{theorem}\label{main result}Let $(\delta,\eta)\in(0,1)^2$ and $H\in(\frac{1}{2},\frac{3}{4})$. If
$\Delta_n\rightarrow0$ and $n\Delta_n\rightarrow\infty$ then, for
some constant
 $c > 0$ depending uniquely on $x_0$, $\theta$ and $H$, we have: for any
 $n\geq1$,
\begin{eqnarray*} &&d_{\mbox{\tiny{Kol}}}\left(\lambda_n\sqrt{T_n}
(\widehat{\theta}_{n}-\theta),N\right)  \\&&\leq c\left({
\frac{1}{\eta \sqrt{n\Delta_{n}}}+}\frac{\sqrt{n}\Delta_{n}^{{{
2H}-\frac12}}}{\eta} +
(n\Delta_{n})^{4H-3}+\eta+\frac{\Delta_{n}^{H}}{\delta}+\frac{1}{n\Delta_{n}\delta^2}+\delta
\right),
\end{eqnarray*}
where   $N\sim\mathcal{N}(0,1)$.
\end{theorem}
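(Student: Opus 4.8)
The plan is to realize $\lambda_n\sqrt{T_n}(\widehat\theta_n-\theta)$ as a quotient and to peel off the error terms with the two technical lemmas, feeding in the estimates already produced in the proof of Theorem \ref{consistency}. First I would set
\[
g:=\frac{\theta\Delta_n}{\alpha_n}\sum_{i=1}^n X_{t_{i-1}}^2,
\qquad
f:=-\frac{\sum_{i=1}^n X_{t_{i-1}}U_i}{\sqrt{T_n\,E(F_{T_n}^2)}};
\]
inserting $\lambda_n=\alpha_n/(\theta T_n\sqrt{E(F_{T_n}^2)})$ into (\ref{representation estimator-theta}) verifies the identity $\lambda_n\sqrt{T_n}(\widehat\theta_n-\theta)=f/g$, and since $g>0$ almost surely Lemma \ref{lemma quotient} gives
\[
d_{\mbox{\tiny{Kol}}}\!\left(\lambda_n\sqrt{T_n}(\widehat\theta_n-\theta),N\right)
\le d_{\mbox{\tiny{Kol}}}(f,N)+P(|g-1|>\delta)+\delta .
\]
The middle probability is precisely the quantity $j_2(n)$ controlled in the consistency proof, so by (\ref{majoration of j_2(n)}) it is $\trianglelefteqslant \Delta_n^{H}/\delta+1/(n\Delta_n\delta^2)$; together with the additive $\delta$ this produces the last three summands of the announced bound.

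It remains to estimate $d_{\mbox{\tiny{Kol}}}(f,N)$. Using the decomposition of $\sum_i X_{t_{i-1}}U_i$ employed in the proof of Theorem \ref{consistency}, together with $X_t=x_0e^{-\theta t}+A_t$ and $F_{T_n}=\frac{1}{\sqrt{T_n}}\int_0^{T_n}A_t\delta B_t$ from (\ref{F_{T_n}}), I would write $f=Y+Z$ with
\[
Z:=-\frac{F_{T_n}}{\sqrt{E(F_{T_n}^2)}},\qquad
Y:=-\frac{\displaystyle\sum_{i=1}^n X_{t_{i-1}}[U_i-(B_{t_i}-B_{t_{i-1}})]+\sum_{i=1}^n\int_{t_{i-1}}^{t_i}(X_{t_{i-1}}-X_t)\delta B_t+x_0\int_0^{T_n}e^{-\theta t}\delta B_t}{\sqrt{T_n\,E(F_{T_n}^2)}} .
\]
By Lemma \ref{lemma technical},
\[
d_{\mbox{\tiny{Kol}}}(f,N)\le d_{\mbox{\tiny{Kol}}}(Z,N)+P(|Y|>\eta)+\frac{\eta}{\sqrt{2\pi}},
\]
and since the standard Gaussian law is symmetric and $F_{T_n}$ has a continuous law, $d_{\mbox{\tiny{Kol}}}(Z,N)=d_{\mbox{\tiny{Kol}}}(F_{T_n}/\sqrt{E(F_{T_n}^2)},N)$, while $\eta/\sqrt{2\pi}$ yields the summand $\eta$.

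For the remainder, Markov's inequality gives $P(|Y|>\eta)\le E|Y|/\eta$, and I would bound $E|Y|$ by three ingredients already at hand: (\ref{majoration of j_{1,1}(n)^0}) gives $E|\sum_i X_{t_{i-1}}[U_i-(B_{t_i}-B_{t_{i-1}})]|\trianglelefteqslant n\Delta_n^{H+1}$, (\ref{majoration of j_{1,2}(n)^0}) gives $E|\sum_i\int_{t_{i-1}}^{t_i}(X_{t_{i-1}}-X_t)\delta B_t|\trianglelefteqslant n^H\Delta_n^{2H}$, and (\ref{double integral finite}) bounds the $L^2$-norm of the last numerator by a constant; moreover (\ref{limit of F_T^2}) gives $E(F_{T_n}^2)\to A(\theta,H)>0$, so that $1/\sqrt{T_n\,E(F_{T_n}^2)}\trianglelefteqslant (n\Delta_n)^{-1/2}$. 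Hence $E|Y|\trianglelefteqslant \sqrt{n}\Delta_n^{H+1/2}+n^{H-1/2}\Delta_n^{2H-1/2}+(n\Delta_n)^{-1/2}$, and since $H<\frac34<1$ both of the first two summands are dominated by $\sqrt{n}\Delta_n^{2H-1/2}$, giving $P(|Y|>\eta)\trianglelefteqslant \frac{1}{\eta\sqrt{n\Delta_n}}+\frac{\sqrt{n}\Delta_n^{2H-1/2}}{\eta}$, which are the first two summands of the bound.

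The main obstacle is the estimate of $d_{\mbox{\tiny{Kol}}}(F_{T_n}/\sqrt{E(F_{T_n}^2)},N)$. Because $A_t$ is itself a Wiener integral, $F_{T_n}$ belongs to the second Wiener chaos, so I would invoke the Nourdin--Peccati bound (Theorem \ref{nourdin-peccati}) with $q=2$, reducing matters to showing
\[
\sqrt{E\!\left[\Big(1-\tfrac12\big\|D\big(F_{T_n}/\sqrt{E(F_{T_n}^2)}\big)\big\|_{\mathcal H}^2\Big)^2\right]}\trianglelefteqslant (n\Delta_n)^{4H-3}.
\]
This is the heart of the argument and the point at which the restriction $H<\frac34$ is indispensable: it requires identifying the kernel of $F_{T_n}$, estimating the contraction norm that controls its fourth cumulant, and using the integrability asymptotics from \cite{HN}; for $H\ge\frac34$ the governing integral ceases to decay and the rate degenerates. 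Collecting the three displays then yields the stated bound.
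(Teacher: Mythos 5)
Your proposal follows the paper's own proof essentially step for step: the same application of Lemma \ref{lemma quotient} with the identification $\lambda_n\sqrt{T_n}(\widehat{\theta}_n-\theta)=f/g$, the same four-term decomposition of $\sum_{i}X_{t_{i-1}}U_i$ fed into Lemma \ref{lemma technical} (with the chaotic term $F_{T_n}/\sqrt{E(F_{T_n}^2)}$ isolated as $Z$), the same Markov-inequality bounds recycled from the estimates (\ref{majoration of j_{1,1}(n)^0}), (\ref{majoration of j_{1,2}(n)^0}), (\ref{double integral finite}) and (\ref{majoration of j_2(n)}) in the consistency proof, and the same reduction of the remaining Kolmogorov distance to the Nourdin--Peccati bound with $q=2$, where the paper likewise simply quotes from \cite{HN} the estimate $E\left[\left(\|DF_{T_n}\|_{\mathcal{H}}^2-E\|DF_{T_n}\|_{\mathcal{H}}^2\right)^2\right]\trianglelefteqslant T_n^{8H-6}$ rather than recomputing the contraction norms. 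The argument is correct and matches the paper's.
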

\begin{proof}Fix $(\delta,\eta)\in(0,1)^2$. From (\ref{representation estimator-theta}) and Lemma
\ref{lemma quotient}, we obtain that
\begin{eqnarray*}&&d_{\mbox{\tiny{Kol}}}\left(\lambda_{n}\sqrt{T_n}
(\widehat{\theta}_{n}-\theta),N\right) \\&&\leq
d_{\mbox{\tiny{Kol}}}\left(
\frac{1}{\sqrt{T_n}\sqrt{E(F_{T_n}^2)}}\sum_{i=1}^{n} U_i
,N\right)+P\left(\left|\frac{\theta\Delta_n}{\alpha_n}\sum_{i=1}^{n}
X_{t_{i-1}}^2-1\right|>\delta\right)+\delta\\&&:=J_1(n)+J_2(n)+\delta.
\end{eqnarray*}
We first study the term $J_1(n)$. Using Lemma \ref{lemma technical},
we obtain
\begin{eqnarray*}J_1(n)&=&
d_{\mbox{\tiny{Kol}}}\left(
\frac{1}{\sqrt{T_n}\sqrt{E(F_{T_n}^2)}}\sum_{i=1}^{n} U_i ,N\right)
\\&=& d_{\mbox{\tiny{Kol}}}\left(
\frac{1}{\sqrt{T_n}\sqrt{E(F_{T_n}^2)}}\sum_{i=1}^{n}
[U_i-\int_{t_{i-1}}^{t_{i}}X_{t_{i-1}}{\delta}B_t]+\frac{1}{\sqrt{T_n}\sqrt{E(F_{T_n}^2)}}
\sum_{i=1}^{n}\int_{t_{i-1}}^{t_{i}}(X_{t_{i-1}}-X_t){\delta}B_t\right.
\\&&\left.\qquad\qquad\qquad{ +\frac{1}{\sqrt{T_n}\sqrt{E(F_{T_n}^2)}}\int_0^{T_n} x_0e^{-\theta t}{\delta}B_t}
+\frac{1}{\sqrt{T_n}\sqrt{E(F_{T_n}^2)}}\int_0^{T_n} {
A_t}{\delta}B_t,N\right)
\\&\leq&P\left(
\frac{1}{\sqrt{T_n}\sqrt{E(F_{T_n}^2)}}\left|\sum_{i=1}^{n}
[U_i-\int_{t_{i-1}}^{t_{i}}X_{t_{i-1}}{\delta}B_t]\right|\geq\frac{\eta}{{
3}}\right)\\&&+ P\left(
\frac{1}{\sqrt{T_n}\sqrt{E(F_{T_n}^2)}}\left|\sum_{i=1}^{n}\int_{t_{i-1}}^{t_{i}}(X_{t_{i-1}}-X_t){\delta}B_t\right|
\geq\frac{\eta}{{ 3}}\right)
\\&&{ +
P\left( \frac{1}{\sqrt{T_n}\sqrt{E(F_{T_n}^2)}}\left|\int_0^{T_n}
x_0e^{-\theta t}{\delta}B_t\right|\geq\frac{\eta}{3}\right)}
\\&&+d_{\mbox{\tiny{Kol}}}\left(\frac{1}{\sqrt{T_n}\sqrt{E(F_{T_n}^2)}}\int_0^{T_n}
{ A_t}{\delta}B_t,N\right)+\frac{\eta}{\sqrt{2\pi}}
\\&:=&J_{1,1}(n)+J_{1,2}(n)+J_{1,3}(n)+J_{1,4}(n)+\frac{\eta}{\sqrt{2\pi}}.
\end{eqnarray*}
 By using (\ref{majoration of j_{1,1}(n)^0}), (\ref{majoration of j_{1,2}(n)^0})  and (\ref{limit of F_T^2}) we deduce
 that{
\begin{eqnarray}\label{majoration of J_{1,1}(n)+J_{1,2}(n)}
J_{1,1}(n)+J_{1,2}(n)+J_{1,3}(n)&\trianglelefteqslant& \frac{3}{\eta
\sqrt{T_n}\sqrt{E(F_{T_n}^2)}}\left(n
 \Delta_{n}^{ H+1}+n^H \Delta_{n}^{{2H}}+1\right)\nonumber\\&\trianglelefteqslant& \frac{{1+} n
\Delta_{n}^{{{2H}}}}{\eta \sqrt{n\Delta_{n}}}\nonumber
 \\&\trianglelefteqslant&{ \frac{1}{\eta
\sqrt{n\Delta_{n}}}+}\frac{\sqrt{n}\Delta_{n}^{{{2H}-\frac12}}}{\eta}.
\end{eqnarray}}
 To achieve the estimation of $J_1(n)$ its remains to
estimate $J_{1,4}(n)$.  We have
\begin{eqnarray*}{ \frac{F_{T_n}}{\sqrt{E(F_{T_n}^2)}}=}\frac{1}{\sqrt{T_n}\sqrt{E(F_{T_n}^2)}}\int_0^{T_n}
{
A_t}{\delta}B_t=\frac{1}{2\sqrt{T_n}\sqrt{E(F_{T_n}^2)}}I_2\left(e^{-\theta
|t-s|}1_{[0,T_n]}^{\otimes2}(t,s)\right).
\end{eqnarray*}
Thus, by using  Theorem \ref{nourdin-peccati} and the fact that
$E\left(\|DF_{T_n}\|_{{\cal{H}}}^2\right)=2E\left(F_{T_n}^2\right)$,
we obtain
\begin{eqnarray*}J_{1,4}(n)=d_{\mbox{\tiny{Kol}}}\left(\frac{1}{\sqrt{T_n}\sqrt{E(F_{T_n}^2)}}\int_0^{T_n}
{
A_t}{\delta}B_t,N\right)&=&d_{\mbox{\tiny{Kol}}}\left(\frac{F_{T_n}}{\sqrt{E(F_{T_n}^2)}},N\right)\\&\leq&
\sqrt{E\left[\left(1-\frac{1}{2}\left\|\frac{DF_{T_n}}{\sqrt{E\left(F_{T_n}^2\right)}}\right\|_{{\cal{H}}}^2\right)^2\right]}\\&=&
\frac{1}{2E(F_{T_n}^2)}\sqrt{E\left[\left(\|DF_{T_n}\|_{{\cal{H}}}^2-E\|DF_{T_n}\|_{{\cal{H}}}^2\right)^2\right]}.
\end{eqnarray*}
Moreover, from \cite{HN}, we have
\[E\left[\left(\|DF_{T_n}\|_{{\cal{H}}}^2-E\|DF_{T_n}\|_{{\cal{H}}}^2\right)^2\right]
\trianglelefteqslant T^{8H-6}_n.\] Thus
\begin{eqnarray}\label{majoration of J_{1,4}(n)} J_{1,4}(n)
\trianglelefteqslant T_n^{4H-3}=(n\Delta_n)^{4H-3}.
\end{eqnarray}
Consequently, by combining (\ref{majoration of
J_{1,1}(n)+J_{1,2}(n)}) and (\ref{majoration of J_{1,4}(n)}) we have
\begin{eqnarray}\label{majoration of J_1(n)}
J_1(n)\trianglelefteqslant{ \frac{1}{\eta
\sqrt{n\Delta_{n}}}+}\frac{\sqrt{n}\Delta_{n}^{{{
2H}-\frac12}}}{\eta}+(n\Delta_n)^{4H-3} +\eta.
\end{eqnarray}
Finally, via the same arguments as in the estimation of  $j_2(n)$,
we obtain the following upper bound of
 $J_2(n)$:
 \begin{eqnarray}\label{majoration of J_2(n)}
J_2(n) \trianglelefteqslant\frac{\Delta_n^H}{\delta}+
\frac{1}{{n\Delta_n}\delta^{2}},
\end{eqnarray}
which completes the proof of Theorem \ref{main result}.
\end{proof}
Let us apply now Theorem \ref{main result} to the particular case ${
\eta=\sqrt{n\Delta_{n}^{\beta}}}$ and $\delta=\Delta_{n}^{\alpha}$,
where  $0<\alpha<H$ and $0<\beta<{ 4H-1}$ which ensure that {
$n\Delta_{n}^{1+2\alpha}\rightarrow\infty$,
$n\Delta_{n}^{\frac{1+\beta}{2}}\rightarrow\infty$ and
$n\Delta_{n}^{\beta }\rightarrow0$
 as $n\rightarrow\infty$.}
{ \begin{theorem}\label{main result second}Let $0<\alpha<H$ and
$1<\beta<4H-1$ such that
 $n\Delta_{n}^{1+2\alpha}\rightarrow\infty$, $n\Delta_{n}^{\frac{1+\beta}{2}}\rightarrow\infty$ and
$n\Delta_{n}^{\beta}\rightarrow0$
 as $n\rightarrow\infty$. If $H\in(\frac{1}{2},\frac{3}{4})$ then, for some constant $c> 0$  depending uniquely on
$x_0$, $\theta$ and $H$, we have: for any $n\geq1$,
\begin{eqnarray*} d_{Kol} \left(\lambda_{n}\sqrt{T_n}
(\widehat{\theta}_{n}-\theta),N\right)
&\leq&c\left(\frac{1}{n\Delta_{n}^{\frac{1+\beta}{2}}}+
\sqrt{\Delta_{n}^{4H-1-\beta}}+
(n\Delta_{n})^{4H-3}+\sqrt{n\Delta_{n}^{\beta}}+
\Delta_{n}^{H-\alpha}\right.
\\&&\qquad\left.+\frac{1}{n\Delta_{n}^{1+2\alpha}}+\Delta_{n}^{\alpha} \right).
\end{eqnarray*}
In particular, as $n\rightarrow\infty$
\begin{eqnarray*}\sqrt{T_n}
(\widehat{\theta}_{n}-\theta)\overset{\texttt{law}}{\longrightarrow}
\mathcal{ N}\left(0,\sigma_H^2\right)
\end{eqnarray*}where $\sigma_H^2=(4H-1)\theta\left(1+\frac{\Gamma(3-4H)\Gamma(4H-1)}{\Gamma(2-2H)\Gamma(2H)}\right)$.
\end{theorem}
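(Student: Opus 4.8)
The plan is to obtain the explicit finite-$n$ bound as a direct specialization of Theorem \ref{main result} and then upgrade this quantitative estimate to a distributional limit via Slutsky's theorem. First I would substitute the choices $\eta=\sqrt{n\Delta_n^\beta}$ and $\delta=\Delta_n^\alpha$ into the seven-term bound of Theorem \ref{main result} and check that each term lands on the corresponding term of the claimed inequality. The verification is purely algebraic: the first term becomes $1/(\eta\sqrt{n\Delta_n})=1/(n\Delta_n^{(1+\beta)/2})$; the second becomes $\sqrt{n}\,\Delta_n^{2H-1/2}/\eta=\Delta_n^{(4H-1-\beta)/2}=\sqrt{\Delta_n^{4H-1-\beta}}$; the terms $(n\Delta_n)^{4H-3}$ and $\eta=\sqrt{n\Delta_n^\beta}$ carry over unchanged; and the last three become $\Delta_n^H/\delta=\Delta_n^{H-\alpha}$, $1/(n\Delta_n\delta^2)=1/(n\Delta_n^{1+2\alpha})$ and $\delta=\Delta_n^\alpha$. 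This yields the stated bound with a constant $c$ again depending only on $x_0,\theta,H$.

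Next I would check that, under the stated constraints, every term of this bound vanishes as $n\to\infty$. The divergence hypotheses $n\Delta_n^{(1+\beta)/2}\to\infty$ and $n\Delta_n^{1+2\alpha}\to\infty$ kill the first and sixth terms; $n\Delta_n^\beta\to0$ kills the fourth; the inequalities $\beta<4H-1$, $0<\alpha<H$ together with $\Delta_n\to0$ kill the second, fifth and seventh; and $H<\tfrac34$ with $T_n=n\Delta_n\to\infty$ kills the third. Consequently $d_{\mbox{\tiny{Kol}}}\big(\lambda_n\sqrt{T_n}(\widehat{\theta}_n-\theta),N\big)\to0$, so $\lambda_n\sqrt{T_n}(\widehat{\theta}_n-\theta)$ converges in law to $N\sim\mathcal{N}(0,1)$.

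To transfer this to the unnormalized statistic, I would compute $\lambda_\infty:=\lim_n\lambda_n$ from the two limits already available in the excerpt, namely $\alpha_n/T_n\to\theta^{1-2H}H\Gamma(2H)$ from (\ref{limit alpha_{n}}) and $E(F_{T_n}^2)\to A(\theta,H)$ from (\ref{limit of F_T^2}). Substituting these into $\lambda_n=\alpha_n/(\theta T_n\sqrt{E(F_{T_n}^2)})$ gives $\lambda_\infty=\theta^{-2H}H\Gamma(2H)/\sqrt{A(\theta,H)}$, and the anticipated main obstacle is the Gamma-function bookkeeping showing that this equals $1/\sigma_H$: the powers of $\theta$ must be seen to collapse to $\theta^{-1/2}$, and the bracketed factor in $A(\theta,H)$ factors as $\Gamma(2H)^2\big(1+\Gamma(3-4H)\Gamma(4H-1)/(\Gamma(2-2H)\Gamma(2H))\big)$, matching exactly the bracket in $\sigma_H^2$. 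Since $\lambda_\infty=1/\sigma_H$ is a finite positive constant, Slutsky's theorem applied to $\sqrt{T_n}(\widehat{\theta}_n-\theta)=\lambda_n^{-1}\cdot\lambda_n\sqrt{T_n}(\widehat{\theta}_n-\theta)$ yields $\sqrt{T_n}(\widehat{\theta}_n-\theta)\overset{\texttt{law}}{\longrightarrow}\sigma_H N$, which is $\mathcal{N}(0,\sigma_H^2)$, as claimed.
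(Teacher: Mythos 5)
Your proposal is correct and follows exactly the route the paper intends: the paper states this theorem as the specialization $\eta=\sqrt{n\Delta_n^{\beta}}$, $\delta=\Delta_n^{\alpha}$ of Theorem \ref{main result}, and your algebraic verification of each term, the check that all seven terms vanish under the stated hypotheses, and the Slutsky step with $\lambda_n\to 1/\sigma_H$ supply precisely the details the paper leaves implicit. The Gamma-function bookkeeping and the exponent computation $-2H-(1-4H)/2=-1/2$ are both right, so nothing is missing.
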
}
{ \begin{remark} As an example, assume that $\Delta_n= n^{-\gamma}$
with given $\gamma \in (\frac{1}{4H-1},\frac{1}{2H})$. Taking
$$0<\alpha<\frac{1-\gamma}{2\gamma}\mbox{ and }
\frac{1}{\gamma}<\beta<~4H-1$$ the conditions of Theorem \ref{main
result second} are satisfied.
\end{remark}}

\vspace{1cm} \noindent {\bf{Acknowledgement} }\\  The author would
like to thank the anonymous referee for his/her valuable suggestions
and remarks.

\bibliographystyle{amsplain}
\addcontentsline{toc}{chapter}{Bibliographie}

\end{document}